\theoremstyle{plain}
\newtheorem{thm}{Theorem}[section]
\newtheorem{lm}[thm]{Lemma}
\newtheorem{cor}[thm]{Corollary}
\newtheorem{prop}[thm]{Proposition}
\theoremstyle{remark}
\newtheorem{rmk}{Remark}
\theoremstyle{definition}
\newcommand{\bnu}{\begin{enumerate}}
\newcommand{\enu}{\end{enumerate}}
\newcommand{\q}{\quad}
\newcommand{\qq}{\qquad}
\newcommand{\al}{\alpha}
\newcommand{\om}{\omega}
\newcommand{\Om}{\Omega}
\newcommand{\la}{\lambda}
\newcommand{\si}{\sigma}
\newcommand{\vp}{\varphi}
\newcommand{\de}{\delta}
\newcommand{\bbz}{\mathbb{Z}}
\newcommand{\bbr}{\mathbb{R}}
\newcommand{\rn}{\mathbb{R}^n}
\newcommand{\f}{\frac}
\newcommand{\p}{\partial}
\newcommand{\nf}{\infty}
\newcommand{\tf}{\tfrac}
\newcommand{\wh}{\widehat}
\newcommand{\mc}{\mathcal}
\newcounter{question}
\newcommand{\bpf}{\begin{proof}}
\newcommand{\epf}{\end{proof}}
\newcommand{\R}{\mathbb R}
\newcommand{\Z}{\mathbb Z}
\begin{document}

\begin{abstract}

We obtain a sharp   $L^2\times L^2 \to L^1$ boundedness criterion for a class of 
bilinear  operators associated with a multiplier given by a 
signed sum of dyadic dilations of a given function,  in terms 
of the $L^q$ integrability of this function; precisely we show that 
boundedness holds if and only if $q<4$. 
We discuss   applications of this result concerning  bilinear rough singular integrals
and    bilinear  dyadic  spherical maximal functions. 

Our second result is  
an optimal $L^2\times L^2\to L^1$ boundedness 
criterion for bilinear  operators associated 
with multipliers with $L^\infty$ derivatives.
This result provides the main tool in the proof of the 
first theorem and is also manifested  
 in terms of the $L^q$ integrability of
the multiplier.  The optimal range is $q<4$ which, 
in the absence of Plancherel's identity on $L^1$, should be 
compared to $q=\infty$ in the classical $L^2\to L^2$ boundedness for linear multiplier operators. 
\end{abstract}

\author{Loukas Grafakos, Danqing He,  Lenka Slav\'\i kov\'a}
\address{Department of Mathematics, University of Missouri, Columbia MO 65211, USA}
\email{grafakosl@missouri.edu}
\address{Department of Mathematics
Sun Yat-sen (Zhongshan) University, Guangzhou, Guangdong, China}
\email{hedanqing@mail.sysu.edu.cn}
\address{Department of Mathematics, University of Missouri, Columbia MO 65211, USA}
\thanks{The first author was supported by the Simons Foundation and by 
the University of Missouri Research Board and Council.
The second author was supported by NNSF of China (No. 11701583), Guangdong Natural Science Foundation
(No. 2017A030310054) and the
Fundamental Research Funds for the Central Universities (No. 17lgpy11).}
\email{slavikoval@missouri.edu}
\thanks{2010 Mathematics Classification Number 42B15, 42B20, 42B99}

\title[$L^2\times L^2 \to L^1$
boundedness criteria]{$L^2\times L^2 \to L^1$
boundedness criteria} 
\date{}
\maketitle


\section{Introduction}\label{S:introduction}

A linear multiplier operator has the form 
$$
S_m(f)(x)=  \int_{\R^n} m(\xi ) \wh{f}(\xi)  e^{2\pi ix\cdot \xi }\,d\xi 
$$
where $m$ is a bounded function on $\R^n$ and $f$ is a Schwartz function whose
 Fourier transform is 
defined by $\wh{f}(\xi)= \int_{\R^n} f(x) e^{-2\pi i x \cdot \xi}dx$.  Here $x\cdot y$ is the usual dot product on $\mathbb R^n$. An important question in 
harmonic analysis is to find  sufficient conditions on $m$ for $S_m$ to admit a
bounded extension from $L^p(\R^n)$ to itself for $1< p<\infty$. If this is the case, the function  
$m$ is called an $L^p$ Fourier multiplier. In view of Plancherel's theorem,  $m$ is 
an $L^2$ Fourier multiplier  if and only if it  is an $L^\nf$ function.  

In this work we investigate the  $L^2\times L^2 \to L^1$ boundedness of  
bilinear multiplier operators which  is as  central  in this theory
 as the $L^2$ boundedness is in  linear multiplier theory.  
In the linear case, $S_m$ is bounded on $L^2$ exactly when  $m$ lies in $L^\nf$. However, 
there does not exist such a straightforward characterization  in this situation, due to 
the lack of Plancherel's theorem on $L^1$. This provides a strong motivation  
to search for sharp sufficient conditions for  the $L^2\times L^2 \to L^1$  boundedness 
 of bilinear multiplier operators, i.e., operators that have       the form   
$$
T_m(f,g)(x)=\int_{\R^n}\int_{\R^n} m(\xi,\eta) \wh{f}(\xi) \wh{g}(\eta) e^{2\pi ix\cdot(\xi+\eta)}\,d\xi d\eta\, ,
$$
where $f,g$ is a pair of Schwartz functions and
$m$ is a   bounded function on $\R^{2n}$.

A classical sufficient condition for boundedness of $T_m$   
is the so-called Coifman-Meyer condition \cite{CM} on $m$, namely the requirement that 
\begin{equation}\label{Mikhlin}
|\p^\al m(\xi,\eta)|\le C_\al |(\xi,\eta)|^{-|\al|}
\end{equation}
 for sufficiently   many  $\al$. This condition   implies that  
  $T_m$ is bounded from $L^{p_1}\times L^{p_2}$ to $L^p$  
  for   $1<p_1,p_2<\nf$ when $1/p=1/p_1+1/p_2$; see 
  \cite{CM} for $p\ge 1$ and \cite{GraTo}, \cite{KS} for $1/2<p<1$. In other words, 
  this theorem says that 
 linear Mikhlin multipliers on $\mathbb R^{2n}$ are
  bounded bilinear multipliers on $\mathbb R^{n}\times \mathbb R^{n}$.  
Analogous results for bilinear multipliers  that 
satisfy    H\"ormander's~\cite{Horman} classical weakening of \eqref{Mikhlin} 
for linear operators, expresssed 
in terms of  Sobolev spaces,      
was initiated  by Tomita \cite{TomitaJFA} and was subsequently further investigated by  
Grafakos,   Fujita, Miyachi, Nguyen, Si, and Tomita among others; see 
 \cite{GraSi} \cite{FuTomi},  \cite{GrMiTo}, \cite{MiTo},  \cite{MiTo2},  \cite{GraNg}, 
\cite{GrMiNgTo}. Related to this we highlight that 
if the     functions $m(2^k\cdot)\phi $ have $s$ derivatives 
  in $L^r(\mathbb R^{2n})$ ($1<r<\infty$) 
  uniformly in  $k\in \mathbb Z$, with $\phi$ being a suitable smooth bump 
  supported in $1/2<|\xi|<2$, 
then   $T_m$ is bounded
from $L^2\times L^2$ to $L^1$ when $s>s_0=\max(n/2,2n/r)$ and $s_0$ 
cannot be replaced by any smaller number; see \cite{GraHeHon}.
Thus more than $n/2 $ derivatives  of $m(2^k\cdot)\phi$  in $L^4(\mathbb R^{2n})$ uniformly in $k$ 
are required of a generic 
multiplier $m$  for $T_m$ to map $L^2(\mathbb R^{n})\times L^2(\mathbb R^{n})$ 
to $L^1(\mathbb R^{n})$.

There are, however, many other multipliers  emerging   naturally in the study of bilinear operators 
which do not fall under in the scope of the Coifman-Meyer condition.
For one class of such multipliers, described below, we obtain a full
characterization of  $L^2\times L^2\to L^1$ boundedness.

Let us consider a function $m$ on $\mathbb R^{2n}$ 
which satisfies,   for some     $\delta>0$, 
\begin{equation}\label{e09182}
|m(\xi,\eta)|\le C'\min(|(\xi,\eta)|,|(\xi,\eta)|^{-\de}),
\end{equation}
and 
\begin{equation}\label{e09181}
|\p^\al m(\xi,\eta)|\le C_\alpha \min(1,|(\xi,\eta)|^{-\de})\q 
\end{equation}
for all multiindices $\alpha$. 
Unlike   the case of Coifman-Meyer conditions~\eqref{Mikhlin}, the rate of decay in~\eqref{e09181} does not depend on the order of derivatives,
and $\de$ could be arbitrarily small, which means that conditions \eqref{e09182} and 
\eqref{e09181} are satisfied by a variety of functions
which are not multipliers of Coifman-Meyer type.

Given a function $m$ with   properties~\eqref{e09182} and~\eqref{e09181}, we set 
$$
m_k(\xi,\eta)=m(2^k(\xi,\eta))
$$
 for   $k\in \mathbb Z$ and we define a multiplier 
 \begin{equation}\label{e02161}
 \sum_k r_k m_k,
  \end{equation}
 where $r_k$ is a given bounded sequence. 
 We investigate boundedness properties of the operator
\begin{equation}\label{E:T}
T:=T_{\sum_{k\in \Z} r_k m_k}=\sum_{k\in \Z} r_k T_{m_k}.
\end{equation}
 


 A sufficient and essentially  necessary 
 condition in terms of the 
integrability of $m$   for the boundedness of the operator $T$ from $L^2 \times L^2$ to $L^1$ is given in the following theorem. In the sequel  
$\|T \|_{L^p\times L^q\to L^r}$  denotes the norm of $T$
from $L^p\times L^q\to L^r$, $\left\lfloor b \right\rfloor$
 denotes the integer part of a real number $b$, and
  $\mathcal C^M(\mathbb R^{2n})$ denotes the class of  all
 functions on $ \mathbb R^{2n} $ whose partial derivatives of order up to 
 and including order $M$ are continuous.

\begin{thm}\label{04161}
Let $1\leq q<4$ and set $M_q'=
\max\big(2n,\big\lfloor \frac{2n}{4-q} \big\rfloor +1 \big)$.
Suppose that $m\in L^q 
\cap \mc C^{M_q'}
$ is a function on $\R^{2n}$ satisfying~\eqref{e09182} and~\eqref{e09181} for all $|\al |\le M_q'$, and let $(r_k)_{k\in \mathbb Z}$ be a sequence such that $|r_k|\leq 1$ for every $k\in \mathbb Z$. Then the bilinear operator $T$ given in~\eqref{E:T} has a bounded extension from $L^2 \times L^2\to  L^1$  
that satisfies
\begin{equation}\label{E:boundedness_T}
\|T\|_{L^2\times L^2\to L^1} \le const( C',C_\al, \delta, q, \|m\|_{L^q}), 
\end{equation}
where $C'$, $C_\al$ and $\delta$ are the constants in ~\eqref{e09182} and~\eqref{e09181}. 

Conversely, if $q\geq 1$ and inequality~\eqref{E:boundedness_T} is satisfied for every $m\in L^q$ fulfilling~\eqref{e09182} and~\eqref{e09181} and for every sequence $(r_k)_{k\in \Z}$ with $|r_k|\leq 1$, then we must necessarily have $q<4$. 
\end{thm}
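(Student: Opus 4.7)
The proof splits into the sufficiency ($q<4$ giving boundedness) and the necessity ($q\ge 4$ forcing failure), which call for rather different tools.

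For the sufficiency direction, I would decompose the inputs via a Littlewood-Paley decomposition, writing $f=\sum_i \Delta_i f$ and $g=\sum_j \Delta_j g$, and expand
\[
T(f,g)=\sum_{i,j,k}r_k\,T_{m_k}(\Delta_i f,\Delta_j g).
\]
The decay hypothesis~\eqref{e09182} makes $m_k$ essentially frequency-localized to $|(\xi,\eta)|\sim 2^{-k}$, so only triples with $i,j$ near $-k$ contribute significantly; the off-diagonal contributions decay at the rate $\delta$ through~\eqref{e09181} and will be absorbed by a Schur-type summation. The decisive ingredient at the diagonal level is a single-scale estimate of the form
\[
\|T_n(F,G)\|_{L^1}\ls\|n\|_{L^q(B)}\|F\|_{L^2}\|G\|_{L^2}\qquad(q<4),
\]
valid for smooth multipliers $n$ supported in a fixed ball $B\subset\mathbb R^{2n}$; this is precisely the companion bilinear multiplier criterion advertised in the abstract, and it is essential here because the combined multiplier $\mathcal M=\sum_k r_k m_k$ is generally too rough for classical H\"ormander-Mihlin or Coifman-Meyer criteria (its formal derivative $\sum_k r_k 2^{k|\alpha|}(\partial^\alpha m)(2^k\cdot)$ need not converge whenever $|\alpha|>\delta$, and $\delta$ may be arbitrarily small). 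Once such a single-scale bound is in hand, the rescaled $L^q$ masses $\|m_k\|_{L^q(B_k)}$ reassemble, by change of variables and summation in $k$, into a constant multiple of $\|m\|_{L^q(\mathbb R^{2n})}$, while the $(i,j)$ summation is closed via Cauchy-Schwarz and the $L^2$ Littlewood-Paley square function.

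For the necessity direction, I would construct, for each $q\ge 4$, a family of multipliers $m_N$ witnessing the failure of~\eqref{E:boundedness_T}; taking $r_0=1$ and $r_k=0$ for $k\ne 0$ reduces the problem to exhibiting a single multiplier in $L^q$, with uniformly bounded $L^q$ norm and with uniformly controlled constants in~\eqref{e09182},~\eqref{e09181}, for which $\|T_{m_N}\|_{L^2\times L^2\to L^1}\to\infty$. A natural ansatz is a superposition of smooth unit-scale bumps placed at well-separated lattice points inside a fixed annulus,
\[
m_N(\xi,\eta)=\sum_{\ell=1}^N c_\ell\,\psi\bigl(\xi-\xi_\ell,\eta-\eta_\ell\bigr),
\]
so that~\eqref{e09182} and~\eqref{e09181} hold trivially and $\|m_N\|_{L^q}\sim\|c\|_{\ell^q}$. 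Testing $T_{m_N}$ on coherent pairs of Gaussian wave packets $(f,g)$ phase-matched with the $(\xi_\ell,\eta_\ell)$ should force an operator-norm lower bound that outgrows $\|c\|_{\ell^q}$ as $N\to\infty$; the sharp threshold $q=4$ can alternatively be read from a square-function inequality for the $T_{m_k}$'s produced by Khintchine randomization of $(r_k)$, which connects the $L^1$-norm of $T$ to a vector-valued $L^4$-type estimate.

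The main obstacle I anticipate on the sufficiency side is the Schur-type summation of the off-diagonal $(i,j,k)$ interactions without losing the sharp threshold $q<4$ and without requiring more regularity of $m$ than~\eqref{e09181}. On the necessity side, the delicate point is to calibrate the amplitudes $(c_\ell)$ precisely at the endpoint $q=4$ (for instance via a logarithmic correction) and to verify that the smoothness~\eqref{e09181} does not accidentally rescue boundedness at this critical exponent.
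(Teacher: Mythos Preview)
\textit{Sufficiency.} You and the paper both invoke the companion single-multiplier criterion (Theorem~\ref{Thm1.1}), but you misstate it and miss the mechanism that makes the sum converge. The bound is $\|T_n\|\le C\,C_0^{1-q/4}\|n\|_{L^q}^{q/4}$, not $\|T_n\|\lesssim\|n\|_{L^q}$, and the paper exploits precisely the factor $C_0^{1-q/4}$: it decomposes $m=\sum_j m\psi_j$ dyadically (not the inputs), and for $j>0$ uses $C_0=\sup_\alpha\|\partial^\alpha(m\psi_j)\|_\infty\lesssim 2^{-j\delta}$ from~\eqref{e09181} to bound $\big\|\sum_k r_k T_{m\psi_j(2^k\cdot)}\big\|$ by $j\cdot 2^{-j\delta(1-q/4)}$, which is summable in $j$. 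No ``reassembly of $L^q$ masses'' takes place --- by dilation invariance each $\|T_{m_k}\|$ equals $\|T_m\|$. Your off-diagonal Schur step also has a real obstacle: when $|\xi|\sim 2^{-k}$ but $|\eta|\ll 2^{-k}$ one has $|2^k(\xi,\eta)|\sim 1$, so~\eqref{e09182}--\eqref{e09181} give no decay on these paraproduct-type pieces; the paper handles them via a separate wavelet off-diagonal argument imported from~\cite{Grafakos2015}.

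\textit{Necessity.} Reducing to $r_0=1$, $r_k=0$ for $k\neq 0$ is a genuine gap at the endpoint $q=4$. The paper explicitly notes (Section~\ref{S:corollaries}) that whether every $m\in L^4\cap\mathcal L^\infty$ yields a bounded $T_m$ is \emph{open}; hence one cannot expect to manufacture a single-scale counterexample there. (For $q>4$ your reduction does succeed, using the multiplier from the necessity half of Theorem~\ref{Thm1.1}, which one checks satisfies~\eqref{e09182}--\eqref{e09181} with some $\delta<1/2$.) Separately, your ansatz of $N\to\infty$ separated unit bumps inside a \emph{fixed} annulus is self-contradictory. The paper's construction is essentially multi-scale: it builds a randomized $m_t\in L^4$ satisfying~\eqref{e09182}--\eqref{e09181} uniformly in $t$, takes $r_K=1$ for $K$ in a finite set $S$, and shows via Khintchine (applied in $t$, not in the $r_k$) that $\int_0^1\big\|\sum_{K\in S}T_{m_t(2^K\cdot)}(F,G)\big\|_{L^1}\,dt\gtrsim\sum_{K\in S}K^{-1}$, which diverges as $S$ exhausts $\{4,5,\dots\}$. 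The unboundedness comes from piling up many dilates, not from a single scale.
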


Using standard duality and interpolation arguments, we can show that 
Theorem~\ref{04161} in fact implies the following more general result.

\begin{cor}\label{11245}
For $1\le q<4$ and $M_q'$ as in Theorem~\ref{04161},  let 
$m$ be a function in $ L^q(\R^{2n})  \cap \mc C^{M_q'}(\R^{2n})$  satisfying~\eqref{e09182} and~\eqref{e09181} for all $|\al |\le M_q'$. Assume that $(r_k)_{k\in \mathbb Z}$ is any bounded sequence. Then 
$$
\|T\|_{L^{p_1}\times L^{p_2}\to L^p}<\nf
$$
whenever $2\le p_1,p_2\le\nf$, $1\leq p\leq 2$ and $ 1/p= 1/{p_1}+ 1/{p_2}$.
\end{cor}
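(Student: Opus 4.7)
The plan is to reduce Corollary~\ref{11245} to three endpoint bounds of the form furnished by Theorem~\ref{04161} and then to invoke bilinear complex interpolation. The three vertices I would aim for in the Riesz diagram are
\[
T:L^2\times L^2\to L^1,\q T:L^\infty\times L^2\to L^2,\q T:L^2\times L^\infty\to L^2,
\]
corresponding to $(1/p_1,1/p_2,1/p)=(1/2,1/2,1),\,(0,1/2,1/2),\,(1/2,0,1/2)$. The first estimate is precisely the content of Theorem~\ref{04161}; the remaining two will come from duality applied to the bilinear transposes of $T$.

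A direct computation of the transposes $T^{*1}$ and $T^{*2}$ defined by $\int T_m(f,g)\,h=\int T_m^{*1}(h,g)\,f=\int T_m^{*2}(f,h)\,g$ shows that they are again bilinear multiplier operators, now with symbols
\[
m^{*1}(\xi,\eta)=m(-\xi-\eta,\eta),\q m^{*2}(\xi,\eta)=m(\xi,-\xi-\eta).
\]
Both maps are volume-preserving linear substitutions for which $|(\xi,\eta)|$ is comparable to $|(-\xi-\eta,\eta)|$ and to $|(\xi,-\xi-\eta)|$ uniformly; combined with the chain rule this shows that $m^{*1}$ and $m^{*2}$ satisfy~\eqref{e09182} and~\eqref{e09181} with comparable constants, share the same $L^q$ norm as $m$, and commute with the dyadic dilation structure in~\eqref{e02161} so that the sequence $(r_k)$ is carried over unchanged. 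Theorem~\ref{04161} therefore applies to each transpose and yields $T^{*1},T^{*2}:L^2\times L^2\to L^1$, which dualize (in the first, respectively second, slot) to $T:L^\infty\times L^2\to L^2$ and $T:L^2\times L^\infty\to L^2$.

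Bilinear complex interpolation (the bilinear Riesz--Thorin theorem) applied to these three endpoint bounds then produces $T:L^{p_1}\times L^{p_2}\to L^p$ at every triple in the convex hull of the three corner points. Projected onto the $(1/p_1,1/p_2)$-plane the hull is exactly the closed triangle with vertices $(1/2,1/2),\,(1/2,0),\,(0,1/2)$; along this triangle the relation $1/p=1/p_1+1/p_2$ is automatic, and the corresponding range is precisely $\{2\le p_1,p_2\le\infty,\ 1\le p\le 2\}$, as required.

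The only mildly delicate step is the verification that the transposed symbols $m^{*i}$ still belong to the class of multipliers covered by Theorem~\ref{04161} with constants controlled purely in terms of those of $m$, $\delta$, and $\|m\|_{L^q}$; once that bookkeeping is in place the remainder of the argument is entirely standard duality and interpolation.
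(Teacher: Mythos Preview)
Your proposal is correct and follows essentially the same route as the paper: verify that the transposed symbols $m^{*1}(\xi,\eta)=m(-\xi-\eta,\eta)$ and $m^{*2}(\xi,\eta)=m(\xi,-\xi-\eta)$ inherit conditions~\eqref{e09182}, \eqref{e09181} and the $L^q$ integrability (via the volume-preserving linear change of variables), apply Theorem~\ref{04161} to each transpose, dualize to obtain the $L^\infty\times L^2\to L^2$ and $L^2\times L^\infty\to L^2$ endpoints, and then interpolate over the triangle. Your observation that the dyadic dilation structure commutes with the transposition, so that the sequence $(r_k)$ is unchanged, is exactly the point that makes Theorem~\ref{04161} applicable to $T^{*1}$ and $T^{*2}$.
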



\medskip

We note that   operators $T$ of   type   \eqref{E:T} include rough bilinear  singular integrals, in particular those studied in \cite{CM2} and \cite{Grafakos2015}. Indeed, if 
$K(y,z)=\Omega(y,z)/|(y,z)|^{2n}$ is the kernel of a  rough bilinear singular integral, with say $\Omega$ in $L^\infty(\mathbb S^{2n-1}$), and $\psi$ is a smooth function supported in the unit annulus on $\R^{2n}$ satisfying $\sum_{j\in \Z} \psi(2^{-j}\cdot)=1$, then the rough bilinear singular integral operator $T_\Omega$ is an operator of the form \eqref{E:T}
with  $m=\wh{K\psi}$ and  $r_k=1$ for every $k$.

Theorem~\ref{04161} and Corollary~\ref{11245} can also be viewed as bilinear counterparts of the results in Duoandikoetxea and  Rubio~de Francia~\cite{Duoandikoetxea1986}, in particular of Theorem B in that reference. In direct analogy to the linear case studied in~\cite{Duoandikoetxea1986}, our results have immediate applications to the study of boundedness properties of rough bilinear singular integral operators studied in ~\cite{Grafakos2015} and bilinear  dyadic  spherical maximal operators,  whose continuous counterparts  were studied in  ~\cite{GGIPS}
  and
\cite{Barrionuevo2017}. 
These applications are presented  in Section~\ref{S:applications}.

\medskip

In this work we  also study multipliers $m$ all of whose partial derivatives are merely in $L^\nf$; we say  that such   functions lie in $\mc L^\nf$. To make things precise,  we define the space
$$
\mc L^\nf (\mathbb R^{2n})=\big\{ m:\mathbb R^{2n}\to \mathbb C:\,\, 
\p^\al m \,\,\textup{exist for all $\al$ and } \| \p^\al m\|_{L^\infty}<\infty\big\}.
$$
These functions  play an important role in the study of multipliers of the form
\eqref{e02161}.

In the linear setting if $m\in L^\nf$, then the corresponding linear operator is bounded on $L^2$.
One might guess that a bilinear operator $T_m$ is bounded from $L^2\times L^2$ to $L^1$ when
$m$ lies in $\mc L^\nf$. However
B\'enyi and Torres~\cite{BT} provided an important counterexample of a  function $m\in\mc L^\nf$
for which the associated bilinear operator $T_m$ is unbounded from $L^{p_1}
\times L^{p_2}$ to $L^p$ for any $1\le p_1,p_2<\nf$ satisfying $1/p= 1/{p_1}+1/{p_2}$.
The same authors also proved  in~\cite{BT}  that 
if  all  derivatives  $\p^\al m$ have finite $L^2_\xi L^1_\eta$ and $L^2_\eta L^1_\xi$ norms, 
then $T_m$ is bounded from $L^2\times L^2$ to $L^1$.
The counterexample of B\'enyi and Torres is also complemented by a 
subsequent positive result of Honz\'ik and the first two authors~\cite[Corollary 8]{Grafakos2015},  
who showed that the mere
$L^2$ integrability of functions in $\mc L^\nf$ suffices to
yield the $L^2\times L^2\to L^1$ boundedness of $T_m$. This result appeared in connection with the study of bilinear rough singular integrals and,  on one hand it 
 simplifies the mixed norm conditions in~\cite{BT}, on the other hand it eliminates 
any  decay requirement on 
the derivatives of the multiplier $m$, making it more suitable than H\"ormander type conditions in   situations when only boundedness of derivatives of $m$ is  a priori known. 

Next we   investigate    the   magnitude of integrability  of a given 
function $m $ in  $ \mc L^\nf$ in order for the bilinear multiplier operator $T_m$   be bounded from 
$L^2\times L^2 \to L^1$. We   determine the   optimal degree of integrability  
required to ensure the aforementioned boundedness.  We have the following result.


\begin{thm}\label{Thm1.1}
Let  $1 \leq q<4$ and set $M_q=\left\lfloor \frac{2n}{4-q} \right\rfloor +1$. 
Let $m(\xi,\eta)$ be a function in $ L^q(\mathbb R^{2n}) \cap \mc C^{M_q}(\mathbb R^{2n})
$   satisfying  
\begin{equation}\label{98hggr}
\|\p^\alpha m\|_{L^{\infty}}\le C_{0}<\infty \, \q \text{for all multiindices $\al$ with } |\alpha|\leq M_q.
\end{equation}
Then there is a constant $C$ depending on $n$ and $q$ such that the bilinear operator $T_m$ with multiplier $m$ satisfies
\begin{equation}\label{e05162}
\|T_m \|_{L^2\times L^2\to L^1}\le C\, C_0^{1-\tfrac {q}{4}}\|m\|_{L^q}^{\tfrac {q}4}.
\end{equation}
Conversely, there is a function $m\in \bigcap_{q>4} L^q(\R^{2n}) \cap \mc L^{\infty}(\R^{2n})$ 
such that the associated operator $T_m$ does not map $L^2 \times L^2$ to $L^1$.
\end{thm}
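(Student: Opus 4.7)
The plan is to attack the positive and negative parts of Theorem~\ref{Thm1.1} in turn. For the positive estimate~\eqref{e05162}, I would decompose $m$ via a smooth partition of unity $\{\psi_j\}_{j\in\Z^{2n}}$ subordinate to unit cubes $Q_j$, and on a slightly enlarged cube $Q_j^*$ expand each $m\psi_j$ in a Fourier series
\[
(m\psi_j)(\xi,\eta) = \sum_{k\in\Z^{2n}} c^j_k\, e^{\pi i (\xi-j_1,\eta-j_2)\cdot k}\,\Phi_j(\xi,\eta),
\]
with $\Phi_j(\xi,\eta)=\Phi_1(\xi-j_1)\Phi_2(\eta-j_2)$ a tensor-product smooth cutoff equal to $1$ on $Q_j$. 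Each exponential together with the tensor cutoff produces an elementary bilinear operator of the form $(f,g)\mapsto f_{j_1}(x+k_1/2)\,g_{j_2}(x+k_2/2)$, where $f_{j_1},g_{j_2}$ are Fourier-localized pieces of $f,g$ around $j_1,j_2$. Using $\|fg\|_{L^1}\le \|f\|_{L^2}\|g\|_{L^2}$, Cauchy--Schwarz in the index $j$, and the almost-orthogonality $\sum_{j_1}\|f_{j_1}\|_{L^2}^2\ls \|f\|_{L^2}^2$ (and analogously for $g$), the estimate reduces to controlling
\[
\Big(\sum_{j\in\Z^{2n}} \Big(\sum_k |c^j_k|\Big)^2\Big)^{1/2}
\]
by the right-hand side of~\eqref{e05162}.

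The core of the argument is the per-cube estimate on $\sum_k|c^j_k|$. Parseval gives $\sum_k|c^j_k|^2\ls \|m\|_{L^2(Q_j^*)}^2$, and H\"older combined with $\|m\|_{L^\nf}\le C_0$ transfers this into control by $\|m\|_{L^q(Q_j^*)}$ and $C_0$; simultaneously, Parseval applied to derivatives together with the bounds $\|\p^\al m\|_{L^\nf}\le C_0$ for $|\al|\le M_q$ yields $\sum_k (1+|k|)^{2M_q}|c^j_k|^2\ls C_0^2$. Interpolating these two estimates via a weighted Cauchy--Schwarz with an optimally chosen scale parameter produces a bound of the form $\sum_k|c^j_k|\ls \|m\|_{L^q(Q_j^*)}^{\tht_q} C_0^{1-\tht_q}$ for a $\tht_q$ determined by $M_q$. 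The choice $M_q=\lfloor 2n/(4-q)\rfloor+1$ is dictated by wanting just enough room that, after squaring, summing in $j$ using $\|m\|_{L^q(Q_j^*)}\le C_0$ together with $\sum_j \|m\|_{L^q(Q_j^*)}^q\ls\|m\|_{L^q}^q$, and taking a square root, we recover precisely the exponents $q/4$ on $\|m\|_{L^q}$ and $1-q/4$ on $C_0$. The main obstacle in this half is exactly this exponent bookkeeping: ensuring that the interpolation inside each cube dovetails with the $\ell^2$ summation across cubes so that no room is wasted and the $L^q$-mass of $m$ is used efficiently.

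For the sharpness part, I would construct $m$ as a weighted lacunary sum of smooth bumps with pairwise disjoint supports,
\[
m = \sum_{k\ge 1} a_k\,\phi(\cdot-v_k),
\]
where $\phi$ is a fixed smooth bump of small support, the $v_k\in\R^{2n}$ are well-separated, and $a_k$ are amplitudes lying in $\ell^q$ for every $q>4$ but not in $\ell^4$, for instance $a_k=(k\log^2 k)^{-1/4}$. Disjointness of supports forces $m\in\mc L^\nf$ with all derivatives bounded, and $\|m\|_{L^q}^q\sim\sum_k|a_k|^q$ is finite precisely when $q>4$. Placing the $v_k$ on the anti-diagonal $\{\xi+\eta=0\}$ and testing $T_m$ against a pair $(f,g)$ whose Fourier transforms concentrate near the $v_k$'s forces the individual bump contributions to add coherently at the origin in physical space, yielding a lower bound on $\|T_m(f,g)\|_{L^1}$ that diverges and rules out $L^2\times L^2\to L^1$ boundedness. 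The difficulty here lies in realizing the coherent addition rigorously in $L^1$, which is precisely where the absence of a Plancherel identity on $L^1$ is exploited, in the spirit of the B\'enyi--Torres construction of~\cite{BT} adapted to the present coefficient decay.
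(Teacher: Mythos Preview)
Both halves of your plan contain genuine gaps.

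\textbf{Sufficiency.} Your reduction to
\[
\Big(\sum_{j\in\Z^{2n}}\Big(\sum_{k}|c^{j}_{k}|\Big)^{2}\Big)^{1/2}
\]
is already too lossy to reach the exponent $q/4$. After the triangle inequality in $k$ and Cauchy--Schwarz in $j=(j_{1},j_{2})$, you are controlling the bilinear form $\sum_{j_{1},j_{2}}A_{j_{1},j_{2}}\|f_{j_{1}}\|_{2}\|g_{j_{2}}\|_{2}$ by the \emph{Frobenius} norm of the matrix $A$, whereas what is actually needed is its $\ell^{2}\to\ell^{2}$ operator norm. Test this on $m=\varepsilon\sum_{i=1}^{N}\phi(\cdot-v_{i})$ with the $v_{i}$ on well-separated unit cubes: each active cube contributes $\sum_{k}|c^{j}_{k}|\sim\varepsilon$, so your quantity is $\sim N^{1/2}\varepsilon$, while the right side of \eqref{e05162} is $C_{0}^{1-q/4}\|m\|_{L^{q}}^{q/4}\sim N^{1/4}\varepsilon$. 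Thus your reduction overshoots by a factor $N^{1/4}$, and no per-cube interpolation can repair this, since the loss occurs at the $j$-summation step. (At best your scheme would yield the bound $C_{0}^{1-q/2}\|m\|_{L^{q}}^{q/2}$, and even that would demand roughly $2n/(2-q)$ derivatives, more than $M_{q}$.) The paper avoids this by working with product wavelets $\Psi^{\la,G}_{(k,l)}=\om_{1,k}\om_{2,l}$ and, crucially, by decomposing the coefficient set into level sets $U_{r}$ and further into $U_{r}^{1}$, $U_{r}^{2}$ according to whether columns are ``thick'' or ``thin''. This combinatorial splitting lets one use $\ell^{2}$ orthogonality in one of the two variables while counting in the other, which is exactly what produces the $\ell^{4}$-type threshold and the exponent $q/4$.

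\textbf{Necessity.} Placing disjoint bumps at $v_{k}=(w_{k},-w_{k})$ on the anti-diagonal does \emph{not} give an unbounded operator. If $\wh{f}=\sum_{k}b_{k}\wh{\varphi}(\cdot-w_{k})$ and $\wh{g}=\sum_{k}d_{k}\wh{\varphi}(\cdot+w_{k})$, the support constraints force each bump to interact only with the matching pair $(b_{k},d_{k})$, and the cancelled modulation yields
\[
T_{m}(f,g)(x)=\Big(\sum_{k}a_{k}b_{k}d_{k}\Big)F(x)
\]
for a fixed $F$; hence $\|T_{m}\|_{L^{2}\times L^{2}\to L^{1}}\lesssim\sup_{k}|a_{k}|<\infty$. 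The construction that works is \emph{Hankel}-type rather than diagonal: the paper takes a multiplier of the form $\sum_{j}\sum_{k}\pm c_{j+k}\psi(\xi_{1}-j)\psi(\eta_{1}-k)\cdots$ with the signs given by Rademacher functions and $c_{l}=(l-1)^{-1/2}(\log e(l-1))^{1/2}$. Testing against $f^{N},g^{N}$ with $\wh{f^{N}},\wh{g^{N}}$ spread over $\sim 2^{N}$ consecutive integer frequencies produces a discrete convolution $\sum_{j}b_{j}d_{l-j}$, and Khintchine's inequality converts the $L^{1}$ norm into $\big(\sum_{l}c_{l}^{2}(\sum_{j}b_{j}d_{l-j})^{2}\big)^{1/2}$, which one drives to infinity while $\|m\|_{L^{q}}\approx(\sum_{l}c_{l}^{q}(l-1))^{1/q}<\infty$ for all $q>4$. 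The dependence on $j+k$ (not on $k$ alone) is the essential mechanism that your anti-diagonal bump construction lacks.
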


As in Corollary~\ref{11245},  duality and interpolation combined with Theorem~\ref{Thm1.1} in fact yields    $L^{p_1}\times L^{p_2} \rightarrow L^p$ boundedness for the operator $T_m$ in a substantially larger set of indices $(p_1,p_2,p)$.

\begin{cor}\label{11241}
Let $1 \leq q<4$ and set $M_q=\left\lfloor \frac{2n}{4-q} \right\rfloor +1$. Assume that $m\in L^q(\mathbb R^{2n}) 
\cap \mc C^{M_q}(\mathbb R^{2n})
$  satisfies \eqref{98hggr}.       
Then 
$$
\|T_m\|_{L^{p_1}\times L^{p_2}\to L^p}<\infty
$$ 
for all indices $(p_1,p_2,p)$ satisfying $2\le p_1,p_2 \le \infty$, 
$1\le p\le 2$,    
and $1/p=1/p_1+1/p_2$.
\end{cor}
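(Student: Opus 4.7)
The plan is to derive Corollary~\ref{11241} from Theorem~\ref{Thm1.1} by a standard duality-and-interpolation argument. Theorem~\ref{Thm1.1} itself supplies the corner bound $\|T_m\|_{L^2\times L^2\to L^1}<\infty$. I will produce two further corner bounds, $L^\infty\times L^2\to L^2$ and $L^2\times L^\infty\to L^2$, via the formal transposes of $T_m$, and then invoke bilinear complex interpolation to fill in the entire triangle of exponents claimed in the statement.

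For the two additional endpoints, I consider the transposes $T_m^{*1}$ and $T_m^{*2}$ characterized by $\langle T_m(f,g),h\rangle=\langle T_m^{*1}(h,g),f\rangle=\langle T_m^{*2}(f,h),g\rangle$. A short computation with Plancherel's formula identifies each $T_m^{*j}$ as a bilinear multiplier operator whose symbol $m^{*j}$ is obtained from $m$ by an invertible unimodular linear change of variables (together with complex conjugation); explicitly,
$$
m^{*1}(\xi,\eta)=\overline{m(-\xi-\eta,\eta)},\qquad m^{*2}(\xi,\eta)=\overline{m(\xi,-\xi-\eta)}.
$$
Consequently $\|m^{*j}\|_{L^q}=\|m\|_{L^q}$, and the chain rule shows that $m^{*j}$ satisfies~\eqref{98hggr} with the same value of $M_q$, up to multiplicative constants depending only on $n$ and $|\alpha|$. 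Theorem~\ref{Thm1.1} therefore applies to $m^{*j}$ and gives $\|T_{m^{*j}}\|_{L^2\times L^2\to L^1}<\infty$; by the above duality relation this is equivalent to the two desired corner estimates for $T_m$.

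Finally I interpolate. The three bounds produced so far correspond to the triples $(1/p_1,1/p_2,1/p)$ equal to $(\tfrac12,\tfrac12,1)$, $(0,\tfrac12,\tfrac12)$, and $(\tfrac12,0,\tfrac12)$, all satisfying $1/p=1/p_1+1/p_2$. Applying bilinear complex interpolation (Grafakos's textbook, or Bergh--L\"ofstr\"om) pairwise to these three bounds covers their closed convex hull, which is exactly the triangle $\{0\le 1/p_1,\,1/p_2\le 1/2,\ 1/p_1+1/p_2\ge 1/2\}$; translating back yields the full range $2\le p_1,p_2\le\infty$, $1\le p\le 2$, $1/p=1/p_1+1/p_2$ asserted in Corollary~\ref{11241}. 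The only substantive step in the whole argument is the verification that the two transposes inherit the hypotheses of Theorem~\ref{Thm1.1}; once this is in place, the remainder is routine.
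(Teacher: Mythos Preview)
Your proposal is correct and follows essentially the same route as the paper: apply Theorem~\ref{Thm1.1} to the two adjoint multipliers $m(-\xi-\eta,\eta)$ and $m(\xi,-\xi-\eta)$ (which inherit the $L^q$ and $\mathcal C^{M_q}$ hypotheses via the unimodular linear change of variables and the chain rule), dualize to obtain the $L^\infty\times L^2\to L^2$ and $L^2\times L^\infty\to L^2$ endpoints, and then interpolate bilinearly across the triangle. The paper's argument is identical up to cosmetic differences (it omits the conjugation in the adjoint symbols and cites a specific interpolation reference).
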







The sufficiency directions of our proofs are   based on the 
product-type wavelet method initiated by  Honz\'ik
and the first two authors in \cite{Grafakos2015} but 
incorporate several crucial combinatorial improvements, while 
the necessary directions use constructions inspired by those in  \cite{GraHeHon}.

\section{Preliminary material}\label{S:prelim}

We   recall some facts related to product-type wavelets. 
  For a fixed $k\in \mathbb N$ there  exist real-valued compactly supported functions $\psi_F,\psi_M$ in $\mathcal C^k(\mathbb R)$, 
which satisfy   $\|\psi_F\|_{L^2(\mathbb R)}=\|\psi_M\|_{L^2(\mathbb R)}=1$, 
  for all $0\le\al\le k$   we have $\int_{\mathbb R}x^{\al}\psi_M(x)dx=0$,  
and,  if $\Psi^G$ is defined by 
$$
\Psi^{G}(x\,)=\psi_{G_1}(x_1)\cdots \psi_{G_{2n}}(x_{2n}) 
$$
for   $G=(G_1,\dots, G_{2n})$ in the set     
$$
 \mathcal I :=\Big\{ (G_1,\dots, G_{2n}):\,\, G_i \in \{F,M\}\Big\}  \, , 
$$
then the  family of 
functions 
$$
\bigcup_{\mu \in  \mathbb Z^{2n}}\bigg[  \Big\{   \Psi^{(F,\dots, F)} (x- \mu  )  \Big\} \cup \bigcup_{\la=0}^\nf
\Big\{  2^{\la n}\Psi^{G} (2^{\la}x-\mu):\,\, G\in \mathcal I\setminus \{(F,\dots , F)\}  \Big\}  
  \bigg]
$$
forms an orthonormal basis of $L^2(\mathbb R^{2n})$, where $x= (x_1, \dots , x_{2n})$.  
 The proof of this fact can be found in  Triebel~\cite{Tr1}.  

Let us denote 
by $\mathcal J$ the set of all pairs $(\lambda,G)$ such that either $\lambda=0$ and $G=(F,\dots,F)$, or $\lambda$ is a nonnegative integer and $G\in \mathcal I \setminus \{(F,\dots,F)\}$. 
For $(\lambda,G)\in \mathcal J$ and $\mu\in \Z^{2n}$ we set
  $$
\Psi^{\la,G}_{\mu} (x\, )=  2^{\la n}\Psi^{G} (2^{\la} x -\mu)\, , \qq x\,\in \mathbb R^{2n}. 
  $$ 
The following lemma is crucial in this work.

\begin{lm}\label{05162}
Let $M$ be a positive integer.
Assume that $m\in \mc C^{M+1}$ is a function on $\R^{2n}$  such that
$$
\sup_{|\alpha | \le M+1} \|\p^\alpha m\|_{L^{\infty}}\le C_{0}<\infty \, .
$$
Then we have 
\begin{equation}\label{887}
|\langle \Psi^{\la,G}_{\mu}, m\rangle| \leq C C_02^{-(M+1+n)\la} \, ,
\end{equation}
provided that $\psi_M$ has $M$ vanishing moments.
\end{lm}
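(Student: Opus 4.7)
The plan is to reduce matters to a one-variable Taylor expansion aligned with whichever coordinate carries the mean-zero wavelet factor $\psi_M$. First I would dispose of the trivial case $(\la,G) = (0,(F,\dots,F))$: since $\Psi^G$ is compactly supported,
$$|\langle \Psi_\mu^{0,(F,\dots,F)}, m\rangle| \le \|m\|_{L^\nf} \|\Psi^G\|_{L^1} \le CC_0,$$
which matches the claim for $\la = 0$.

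For $\la \ge 1$ I would change variables $y = 2^\la x - \mu$ to rewrite
$$\langle \Psi_\mu^{\la,G}, m\rangle = 2^{-\la n} \int_{\R^{2n}} \Psi^G(y)\, m\bigl(2^{-\la}(y+\mu)\bigr)\, dy.$$
Since $G\ne (F,\dots,F)$, there exists at least one index $i_0$ with $G_{i_0} = M$, so the factor $\psi_M(y_{i_0})$ appears inside $\Psi^G$. Fixing all other coordinates, I would Taylor expand $y_{i_0}\mapsto m(2^{-\la}(y+\mu))$ about $y_{i_0}=0$ to order $M$, producing a polynomial of degree at most $M$ in $y_{i_0}$ plus an integral remainder $R(y)$ whose size is controlled by a single $(M+1)$-st partial derivative of $m$ in the $i_0$-direction:
$$|R(y)| \le C\, 2^{-\la(M+1)}\,|y_{i_0}|^{M+1}\,\|\p_{i_0}^{M+1}m\|_{L^\nf} \le C\,C_0\, 2^{-\la(M+1)}\,|y_{i_0}|^{M+1}.$$

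The $M$ vanishing moments of $\psi_M$ (which I read, as stipulated by the wavelet construction, as $\int_{\R} y^j \psi_M(y)\,dy = 0$ for $0\le j\le M$) annihilate the entire polynomial contribution upon integration in $y_{i_0}$, so only the remainder survives. Thanks to the compact supports of $\psi_F$ and $\psi_M$, the product $|y_{i_0}|^{M+1}|\Psi^G(y)|$ is integrable with a bound independent of $\mu$ and $\la$, hence
$$\Big|\int\Psi^G(y)\,m\bigl(2^{-\la}(y+\mu)\bigr)\,dy\Big| \le C\,C_0\,2^{-\la(M+1)}.$$
Multiplying by the $2^{-\la n}$ from the change of variables yields the desired inequality $|\langle \Psi_\mu^{\la,G}, m\rangle| \le CC_0\,2^{-(M+1+n)\la}$.

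The only subtlety is the bookkeeping between the Taylor order, the number of vanishing moments used, and the regularity of $m$: one must Taylor expand to exactly the order that saturates the hypothesis $\|\p^\al m\|_{L^\nf}\le C_0$ for $|\al|\le M+1$, so that the remainder sees only a single directional derivative of top order $M+1$; at the same time the monomials $1,y_{i_0},\dots,y_{i_0}^M$ must all be killed by $\psi_M$. These are precisely aligned by the hypothesis, and the argument is then free of any substantive difficulty. The lattice shift $\mu$ drops out, as it must, and the constant $C$ depends only on $n$, $M$, and the wavelets $\psi_F,\psi_M$.
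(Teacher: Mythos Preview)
Your argument is correct and is precisely the standard proof (the paper omits the proof here, referring to \cite{Grafakos2015}, where the same change-of-variables plus one-dimensional Taylor expansion in a coordinate carrying $\psi_M$ is used). One cosmetic point: your case split ``$(\la,G)=(0,(F,\dots,F))$'' versus ``$\la\ge 1$'' leaves out $\la=0$ with $G\ne(F,\dots,F)$, but your Taylor argument applies verbatim there (or the trivial bound already suffices), so nothing is missing.
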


This lemma is essentially Lemma 7 in \cite{Grafakos2015}
and its proof is omitted.

\section{Proof of Theorem~\ref{Thm1.1}}


\begin{proof}
 \textit{Sufficiency.}
We utilize the wavelet decomposition of $m$ described in Section~\ref{S:prelim}. We recall that our wavelets are compactly supported and the function $\psi_M$ has $M$ vanishing moments, where $M$ is a certain (large) number to be determined later.  

For $(\lambda,G)\in \mathcal J$ and $\mu\in \Z^{2n}$ we set
$$
b_{\mu}^{\lambda,G}=\langle \Psi_{\mu}^{\lambda,G},m \rangle.
$$
By~\cite[Theorem 1.64]{TrIII} and by the fact that $L^q=F^0_{q,2}$, we obtain
\begin{equation}\label{E:triebel_equivalence}
\|m\|_{L^q}\approx \Big\|\Big(\sum_{(\lambda,G)\in \mathcal J} \sum_{\mu \in \Z^{2n}} |b_{\mu}^{\lambda,G} 2^{\la n}\chi_{Q_{\lambda \mu}}|^2\Big)^{1/2} \Big\|_{L^q},
\end{equation}
where 
$Q_{\lambda \mu}$ is the cube centered at $2^{-\la} \mu$ with sidelength $2^{1-\la}$.

Now, let us fix $(\lambda,G)\in \mathcal J$. To simplify notation, we will write $b_{\mu}$ instead of $b_{\mu}^{\lambda,G}$ in what follows. We also denote by $\tilde{Q}_{\lambda \mu}$ the cube centered at  $2^{-\la}\mu$ with sidelength $2^{-\la}$. Noting that these cubes are pairwise disjoint in $\mu$ (for the fixed value of $\lambda$), the equivalence~\eqref{E:triebel_equivalence} yields
\begin{align}\label{E:cube}
\|m\|_{L^q} 
&\gtrsim 2^{\lambda n} \big\|\big(\sum_{\mu \in \Z^{2n}} |b_\mu|^2 \chi_{Q_{\lambda \mu}} \big)^{\frac{1}{2}}\big\|_{L^q}
\geq 2^{\lambda n} \big\|\big(\sum_{\mu \in \Z^{2n}} |b_\mu|^2 \chi_{\tilde{Q}_{\lambda \mu}} \big)^{\frac{1}{2}}\big\|_{L^q}\\
&=2^{\lambda n} \big\|\sum_{\mu\in \Z^{2n}} |b_\mu| \chi_{\tilde{Q}_{\lambda \mu}}\big\|_{L^q}
=2^{\lambda n(1-\frac{2}{q})} \big(\sum_{\mu\in \Z^{2n}} |b_\mu|^q \big)^{\frac{1}{q}}.
\nonumber
\end{align}

Setting $b= (b_{\mu})_{\mu \in \Z^{2n}}$, \eqref{E:cube} becomes $\|b\|_{\ell^q} \leq C 2^{\lambda n(\frac{2}{q}-1)} \|m\|_{L^q}$. Also, Lemma~\ref{05162} implies that
   $\|b\|_{\ell^\nf}\le CC_02^{-(M+n+1)\la}$,  
where $M$ is the to be determined number of vanishing moments of $\psi_M$.  

For a nonnegative integer $r$ we define 
$$
U_r=\{(k,l) \in \mathbb Z^{n} \times \Z^n=\Z^{2n}:2^{-r-1}\|b\|_{\ell^\infty}<|b_{(k,l)}|\leq 2^{-r}\|b\|_{\ell^\infty}\}.
$$
We can write $U_r$ as a union of the following two disjoint sets:
\begin{align*}
&U_r^1=\{(k,l) \in U_r: {\rm card} \{s: (k,s) \in U_r\} \geq 2^{\frac{rq}{2}} \|b\|_{\ell^q}^{\frac{q}{2}} \|b\|_{\ell^\infty}^{-\frac{q}{2}}\};\\
&U_r^2=\{(k,l) \in U_r: {\rm card} \{s: (k,s) \in U_r\} < 2^{\frac{rq}{2}} \|b\|_{\ell^q}^{\frac{q}{2}} \|b\|_{\ell^\infty}^{-\frac{q}{2}}\}.
\end{align*}
Let us denote
$$
E=\{k \in \Z^n:(k,l)\in U_r^1 \textup{ for some } l \in \Z^n\}.
$$
Then 
$$
{\rm card} E \cdot 2^{\frac{rq}{2}} \|b\|_{\ell^q}^{\frac{q}{2}} \|b\|_{\ell^\infty}^{-\frac{q}{2}} 2^{-q(r+1)} \|b\|_{\ell^\infty}^q \leq \sum_{(k,l)\in U_r^1} |b_{(k,l)}|^q \leq \|b\|_{\ell^q}^q,
$$
and therefore
$$
{\rm card} E \leq C 2^{\frac{rq}{2}} \|b\|_{\ell^q}^{\frac{q}{2}} \|b\|_{\ell^\infty}^{-\frac{q}{2}}\leq C 2^{\frac{rq}{2}} 2^{\lambda n-\frac{\lambda nq}{2}} \|m\|_{L^q}^{\frac{q}{2}} \|b\|_{\ell^\infty}^{-\frac{q}{2}}.
$$

Given $(k,l)\in \Z^{n} \times \Z^n$, it follows from the definition of $\Psi_{(k,l)}^{\lambda,G}$ that $\Psi_{(k,l)}^{\lambda,G}$ can be written in the tensor product form $\Psi_{(k,l)}^{\lambda,G}=\omega_{1,k} \omega_{2,l}$, where $\omega_{1,k}$ is a function of the variables $(x_1,\dots,x_n)$, $\omega_{2,l}$ is a function of $(x_{n+1},\dots,x_{2n})$ and $\|\omega_{1,k}\|_{L^\infty} \approx \|\omega_{2,l}\|_{L^\infty}=2^{\frac{\lambda n}{2}}$. 
Define
$$
m^{r,1}=\sum_{(k,l)\in U^1_r} b_{(k,l)} \Psi_{(k,l)}^{\lambda,G}=\sum_{(k,l)\in U^1_r} b_{(k,l)} \om_{1,k} \om_{2,l}. 
$$
Let ${\mathcal F}^{-1}$ denote the inverse Fourier transform. 
Then 
\begin{align*}
\big\| T_{m^{r,1}}(f,g)\big\|_{L^1}  
\leq &\Big\|\sum_{(k,l)\in U_r^1} b_{(k,l)}{\mathcal F}^{-1}(\omega_{1,k}\widehat f\,){\mathcal F}^{-1} (\omega_{2,l}\widehat g\,)\Big\|_{L^1}\\
\leq\, &\sum_{k\in E} \big\|\omega_{1,k}\widehat f\,\big\|_{L^2} \Big\|\sum_{l: (k,l)\in U_r^1} b_{(k,l)} \omega_{2,l}\widehat g \,\Big\|_{L^2}\\
\le\,&\,\,C\sum_{k\in E} \big\|\omega_{1,k}\wh f\, \big\|_{L^2} 2^{\frac{\la n}{2}}2^{-r}\|b\|_{\ell^\infty} \|g\|_{L^2}\\
\le\,&\,\,C\Big(\sum_{k\in E} 1\Big)^{1/2}\Big(\sum_{k\in E}\big\|\om_{1,k}\wh f\, \big\|_{L^2}^2\Big)^{\frac{1}{2}}2^{\frac{\la n}{2}}
2^{-r}\|b\|_{\ell^\nf}\|g\|_{L^2}\\
\leq\,&\, \, C 2^{\frac{\lambda n}{2} +\lambda n(1-\frac{q}{4})} 2^{\frac{rq}{4}-r} \|m\|_{L^q}^{\frac{q}{4}} \|b\|_{\ell^{\infty}}^{1-\frac{q}{4}} \|f\|_{L^2} \|g\|_{L^2}. 
\end{align*}
Notice that in the estimates above we used the property that the supports of the functions $\omega_{1,k}$ and $\omega_{2,l}$ only have finite overlaps.

Now define
$$
m^{r,2}=\sum_{(k,l)\in U_r^2} b_{(k,l)} \omega_{1,k} \omega_{2,l}.
$$
Then
\begin{align*}
\|T_{m^{r,2}}(f,g)\|_{L^1} 
&\leq \Big\|\sum_{(k,l)\in U_r^2} b_{(k,l)}{\mathcal F}^{-1}(\omega_{1,k}\widehat f\,){\mathcal F}^{-1} (\omega_{2,l}\widehat g\,)\Big\|_{L^1}\\
&\leq \sum_{l\in \mathbb Z^n} \big\|\omega_{2,l}\widehat g\,\big\|_{L^2} \Big\|\sum_{k: (k,l)\in U_r^2} b_{(k,l)} \omega_{1,k}\widehat f \,\Big\|_{L^2}\\
&\leq \Big(\sum_{l\in \mathbb Z^n} \big\|\omega_{2,l}\widehat g\,\big\|_{L^2}^2\Big)^{\frac{1}{2}} 
\Big(\sum_{l\in \mathbb Z^n} \Big\|\sum_{k: (k,l)\in U_r^2} b_{(k,l)} \omega_{1,k}\widehat f \,\Big\|_{L^2}^2 \Big)^{\frac{1}{2}}\\
&\leq C 2^{\frac{\lambda n}{2}} \|g\|_{L^2} \Big(\sum_{k\in \mathbb Z^n} \big\|\omega_{1,k} \widehat f\, \big\|_{L^2}^2 \sum_{l: (k,l)\in U_r^2} |b_{(k,l)}|^2 \Big)^{\frac{1}{2}}\\
&\leq C 2^{\frac{\lambda n}{2}} \|g\|_{L^2} 2^{\frac{rq}{4}-r} \|b\|_{\ell^q}^{\frac{q}{4}} \|b\|_{\ell^\infty}^{1-\frac{q}{4}} \Big(\sum_{k\in \mathbb Z^n} \big\|\omega_{1,k} \widehat f\, \big\|_{L^2}^2 \Big)^{\frac{1}{2}}\\
&\leq C 2^{\frac{\lambda n}{2} +\lambda n(1-\frac{q}{4})} 2^{\frac{rq}{4}-r} \|m\|_{L^q}^{\frac{q}{4}} \|b\|_{\ell^\infty}^{1-\frac{q}{4}} \|f\|_{L^2} \|g\|_{L^2} .
 \end{align*}

Let  
$$
 m^{r }=\sum_{(k,l)\in U_r } b_{(k,l)} \om_{1,k} \om_{2,l}.
 $$
Then the previous estimates yield
$$
\|T_{m^r}\|_{L^2\times L^2\to L^1}
\le C 2^{\frac{\lambda n}{2} +\lambda n(1-\frac{q}{4})} 2^{\frac{rq}{4}-r} \|m\|_{L^q}^{\frac{q}{4}} \|b\|_{\ell^\infty}^{1-\frac{q}{4}}.
$$
Using that 
$\|b\|_{\ell^\nf}\le CC_02^{-\la(M+1+n)}$, we obtain
$$
\|T_{m^r}\|_{L^2\times L^2\to L^1} \leq C C_0^{1-\frac{q}{4}} 2^{-\lambda(\frac{4-q}{4} (M+1)-\frac{n}{2})} 2^{-r(1-\frac{q}{4})} \|m\|_{L^q}^{\frac{q}{4}}.
$$ 
Thus, if we choose $M=\lfloor \frac{2n}{4-q} \rfloor$,
we can sum over $r$ and $(\lambda,G)$ in order, which implies \eqref{e05162}.

\bigskip

 \textit{Necessity.}
We now pass to the converse direction. 
Let $\varphi$ be a Schwartz function on $\R$ whose Fourier transform is supported in the interval $[-1/100,1/100]$, and let $(b_j)_{j=1}^\infty$ and $(d_j)_{j=1}^\infty$ be two sequences of nonnegative numbers with only finitely many nonzero terms. Define functions $f$ and $g$ on $\R^n$ in terms of their Fourier transform by
\begin{equation}\label{E:f}
\wh{f}(\xi)=\sum_{j=1}^\infty b_j \wh{\varphi}(\xi_1-j) \prod_{r=2}^n \wh{\varphi}(\xi_r-1) 
\end{equation}
and
\begin{equation}\label{E:g}
\wh{g}(\eta)=\sum_{k=1}^\infty d_k \wh{\varphi}(\eta_1-k) \prod_{r=2}^n \wh{\varphi}(\eta_r-1),
\end{equation}
with the convention that $\prod_{r=2}^n \wh{\varphi}(\eta_r-1)=1$ if $n=1$.
Then both $f$ and $g$ are Schwartz functions whose $L^2$-norms are bounded by a constant multiple of $\big(\sum_{j=1}^\infty b_j^2\big)^{\frac{1}{2}}$ and $\big(\sum_{k=1}^\infty d_k^2\big)^{\frac{1}{2}}$, respectively. 

Let $(a_j(t))_{j=1}^\infty$ denote the sequence of Rademacher functions; see, for instance, \cite[Appendix C]{CFA}
for the definition and basic properties of Rademacher functions. For any $t\in [0,1]$, consider the function $m_t$ given by
$$
m_t(\xi,\eta)=\sum_{j=1}^\infty \sum_{k=1}^\infty a_{j+k}(t) c_{j+k} \psi(\xi_1-j) \psi(\eta_1-k) \prod_{r=2}^n \psi(\xi_r-1) \psi(\eta_r-1),
$$
where $(c_\ell)_{l=2}^\infty$ is a bounded sequence of nonnegative numbers and $\psi$ is a smooth function on $\R$ supported in the interval $[-1/10,1/10]$ assuming value 1 in $[-1/20,1/20]$. Then
\begin{align*}
&T_{m_t}(f,g)(x)\\
&=\sum_{j=1}^\infty \sum_{k=1}^\infty b_j d_k a_{j+k}(t) c_{j+k} (\varphi(x_1))^2 e^{2\pi ix_1(j+k)} \prod_{r=2}^n e^{4\pi ix_r} (\varphi(x_r))^2 \\
&=\sum_{l=2}^\infty a_l(t) c_l e^{2\pi ix_1 l} (\varphi(x_1))^2 \sum_{j=1}^{l-1} b_j d_{l-j} \prod_{r=2}^n e^{4\pi ix_r} (\varphi(x_r))^2.
\end{align*}

Using Fubini's theorem and Khintchine's inequality (see, for instance, \cite[Appendix C]{CFA}), we obtain
\begin{align}\label{E:l1}
&\int_0^1 \|T_{m_t}(f,g)\|_{L^1} \,dt\\
\nonumber
&= \left(\int_{\mathbb R} |\varphi(y)|^2\,dy\right)^{n-1}\int_{\mathbb R} \int_0^1 \left|\sum_{l=2}^\infty a_l(t) c_l e^{2\pi ix_1 l} (\varphi(x_1))^2 \sum_{j=1}^{l-1} b_j d_{l-j}\right|\,dx_1\,dt\\
\nonumber
&\approx \left(\int_{\mathbb R} |\varphi(y)|^2\,dy\right)^{n-1} \int_{\mathbb R} \left(\sum_{l=2}^\infty \left(c_l |\varphi(x_1)|^2 \sum_{j=1}^{l-1} b_j d_{l-j}\right)^2 \right)^{\frac{1}{2}}\,dx_1\\
\nonumber
&=\left(\sum_{l=2}^\infty c_l^2\left(\sum_{j=1}^{l-1} b_j d_{l-j}\right)^2 \right)^{\frac{1}{2}} \left(\int_{\mathbb R} |\varphi(y)|^2\,dy\right)^{n}. 
\end{align}

We now fix a positive integer $N\geq 2$ and set $b_j^N=d_j^N=2^{-\frac{N}{2}}$ if $j=2^N, \dots, 2^{N+1}-1$ and $b_j^N=d_j^N=0$ otherwise. Then the sequences $(b_j^N)_{j=1}^\infty$ and $(d_j^N)_{j=1}^\infty$ belong to the unit ball in $\ell^2$. We also observe that if $j\in \mathbb N$ and $l<2^{N+1}$ then either $j<2^N$ or $l-j<2^N$, and if $l>2^{N+2}-2$ then either $j>2^{N+1}-1$ or $l-j>2^{N+1}-1$. Consequently,
\begin{equation}\label{E:zero}
\sum_{j=1}^{l-1} b_j^N d_{l-j}^N=0 \quad \textup{ if $l<2^{N+1}$ or $l>2^{N+2}-2$}.
\end{equation}
On the other hand, if $5 \cdot 2^{N-1} \leq l \leq 6\cdot 2^{N-1}$ then
\begin{equation}\label{E:nonzero}
\sum_{j=1}^{l-1} b_j^N d_{l-j}^N
\geq \sum_{j=\max\{2^N,l+1-2^{N+1}\}}^{\min\{2^{N+1}-1,l-2^N\}} 2^{-\frac{N}{2}} 2^{-\frac{N}{2}} 
\geq \sum_{j=2\cdot 2^{N-1}+1}^{3 \cdot 2^{N-1}} 2^{-N}\geq \frac{1}{2}.
\end{equation}

We define $c_l=(l-1)^{-\frac{1}{2}} (\log e(l-1))^{\frac{1}{2}}$. If $f^N$ and $g^N$ are given by~\eqref{E:f} and~\eqref{E:g}, respectively, with $b_j$ replaced by $b_j^N$ and $d_k$ replaced by $d_k^N$, then a combination of~\eqref{E:l1} and~\eqref{E:nonzero} yields
\begin{align*}
\int_0^1 &\|T_{m_t}(f^N,g^N)\|_{L^1}\,dt\\ 
&\gtrsim \left(\sum_{l=5\cdot 2^{N-1}}^{6 \cdot 2^{N-1}} c_l^2\right)^{\frac{1}{2}} 
=\left(\sum_{l=5\cdot 2^{N-1}}^{6\cdot 2^{N-1}} (l-1)^{-1} \log e(l-1)\right)^{\frac{1}{2}}\\
&\approx \left(\log^2 6 \cdot 2^{N-1}-\log^2 5\cdot 2^{N-1}\right)^{\frac{1}{2}}\\
&=\left(\log 6\cdot 2^{N-1}+\log 5\cdot 2^{N-1}\right)^{\frac{1}{2}}\left(\log 6\cdot 2^{N-1}-\log 5\cdot 2^{N-1}\right)^{\frac{1}{2}}\\
&\approx N^{\frac{1}{2}}. 
\end{align*} 
Consequently, for every $N\geq 2$ we can find $t_N\in [0,1]$ such that 
\begin{equation}\label{ee02031}
\|T_{m_{t_N}}(f^N,g^N)\|_{L^1} \geq CN^{\frac{1}{2}},
\end{equation}
with $C$ independent of $N$.

Let us now consider the function
$$
m(\xi,\eta)=\sum_{j=1}^\infty \sum_{k=1}^\infty s_{j+k} c_{j+k} \psi(\xi_1-j) \psi(\eta_1-k) \prod_{r=2}^n \psi(\xi_r-1) \psi(\eta_r-1),
$$
where $(s_l)_{l=2}^\infty$ is a sequence taking values in $\{-1,1\}$ and satisfying $s_l=a_l(t_N)$ if $N\geq 2$ and $2^{N+1}\leq l \leq 2^{N+2}-1$. By~\eqref{E:zero} we therefore obtain 
\begin{align*}
T_m&(f^N,g^N)(x)\\
&=\sum_{l={2^{N+1}}}^{2^{N+2}-1} s_l c_l e^{2\pi ix_1 l} (\varphi(x_1))^2 \sum_{j=1}^{l-1} b_j^N d_{l-j}^N \prod_{r=2}^n e^{4\pi ix_r} (\varphi(x_r))^2\\
&=\sum_{l={2^{N+1}}}^{2^{N+2}-1} a_l(t_N) c_l e^{2\pi ix_1 l} (\varphi(x_1))^2 \sum_{j=1}^{l-1} b_j^N d_{l-j}^N \prod_{r=2}^n e^{4\pi ix_r} (\varphi(x_r))^2\\
&=T_{m_{t_N}}(f^N,g^N)(x),
\end{align*} 
which yields
$$
\|T_{m}(f^N,g^N)\|_{L^1} \geq CN^{\frac{1}{2}},
$$
and so $T_m$ is unbounded from $L^2 \times L^2$ into $L^1$. We can also observe that $m$ is a smooth function with all derivatives bounded and
$$
\|m\|_{L^q} \approx \left(\sum_{l=2}^\infty c_l^q (l-1) \right)^{\frac{1}{q}} 
=\left(\sum_{l=2}^\infty (l-1)^{1-\frac{q}{2}} (\log e(l-1))^{\frac{q}{2}}\right)^{\frac{1}{q}},
$$
which implies that $m\in \bigcap_{q>4} L^q$.
\epf

\begin{rmk}
A result related to Theorem~\ref{Thm1.1} appeared in~\cite[Remark 2]{GraHeHon}  asserting that the inequality
\begin{equation}\label{e12221}
\|T_m\|_{L^2\times L^2\to L^1}\le C\|m\|_{L^q_s}
\end{equation}
holds whenever $qs>2n$ and $1<q<4$; here, $L^q_s$ stands for the inhomogeneous Sobolev space with integrability index $q$ and smoothness index $s$. We     observe that the testing functions from the proof of Theorem~\ref{Thm1.1} show that inequality~\eqref{e12221} fails for $q>4$. 
\end{rmk}

\begin{rmk}
The quantities on the right-hand side of inequalities~\eqref{e12221} and~\eqref{e05162} can be compared in certain situations via the classical Gagliardo-Nirenberg inequality~\cite{Gagliardo,Nirenberg}. For instance, if $n$ is an odd integer then
$$
\|m\|_{L^4_{\frac{n+1}{2}}} \leq C\,
 \Big(\sup_{|\alpha|\leq n+1} \|\partial^\alpha m\|_{L^\infty}\Big)^{\frac{1}{2}} \|m\|_{L^2}^{\frac{1}{2}};
$$
we point out, however, that the value $q=4$ is not allowed in~\cite[Remark 2]{GraHeHon}.
\end{rmk}

\section{Proof  of Theorem~\ref{04161}}

Next we prove Theorem~\ref{04161}.  The sufficiency part follows from Theorem~\ref{Thm1.1} via an argument as in~\cite{Grafakos2015}; we provide a  sketch of the proof for the sake of completeness.

\bpf[Proof  of Theorem~\ref{04161}]

\textit{Sufficiency.}
Let $\psi$ be a smooth function supported in the unit annulus such that $\sum_{j\in \Z} \psi(2^{-j}\cdot)=1$. For every $j\in \Z$ we denote $\psi_j=\psi(2^{-j}\cdot)$ and $m_{j,0}=m\psi_j$. Further, for every $k\in \Z$ we set $m_{j,k}(\cdot)=m_{j,0}(2^k\cdot)$. If we define $\widetilde T_j$ to be the bilinear operator with multiplier $\sum_{k \in \Z}
r_km_{j,k}$, then we have
$T=\sum_{j\in \Z} \widetilde T_j$.

The boundedness of $\sum_{j\le 0}\widetilde T_j$ follows from
the bilinear Coifman-Meyer theory, see \cite[Proposition~3]{Grafakos2015} for a detailed argument. We point out that the validity of the estimates~\eqref{e09182} and~\eqref{e09181} for $(\xi,\eta)$ near the origin is essential here, and that this argument requires the restriction $M_q'\geq 2n$.  

Next we sketch the proof when $j>0$. The proof follows the argument in~\cite{Grafakos2015}, with modifications based on Theorem~\ref{Thm1.1}. The goal is to obtain the $L^2\times 
L^2\to L^1$ boundedness of $\widetilde{T_j}$, with bounds forming a convergent series in $j$. We first decompose the multiplier $m_{j,0}$ into its diagonal and off-diagonal parts. 
Precisely, let $m_{j,0}=\sum_\om a_\om\om$ be the wavelet decomposition of $m_{j,0}$, and let $D_1$ denote the collection of wavelets
whose supports have a nonempty intersection with the set $\{(\xi,\eta):\,2^{-j}|\xi|\le|\eta|\le 2^j|\xi|\}$. Then we define the diagonal part
$m_{j,0}^1=\sum_{\om\in D_1}a_\om\om$, and we 
denote by $m_{j,0}^2=m_{j,0}-m_{j,0}^1$ the off-diagonal part of $m_{j,0}$.
We remark that Theorem~\ref{Thm1.1}   yields an estimate for the norm of the bilinear multiplier operator corresponding to $m_{j,0}^1$ due to the fact that 
the proof is only based on 
   estimates of the coefficients $a_\om$.
Using this and a standard dilation argument we obtain the $L^2\times 
L^2\to L^1$ boundedness of the operator  $\widetilde T_j^1$ associated with
the bilinear multiplier $\sum_k r_k m_{j,0}^1(2^k\cdot)$
with   bound 
$$
C\, j\|m_{j,0}\|_{L^q}^{\tf q4}C_0^{1-\tf q4}\le C'\, \|m\|_{L^q}^{\frac{q}{4}} j2^{-j\de(1-\frac{q}{4})},
$$
where 
$$
C_0=\max_{|\al|\le \lfloor \frac{2n}{4-q}\rfloor +1}\|\p^\al m_{j,0}\|_{L^\nf}\le C''\, 2^{-j\de},
$$
by~\eqref{e09181}.
We omit the details which can be obtained by a straightforward 
modification of  the proof in \cite[Section 4]{Grafakos2015}
 with the help of Theorem~\ref{Thm1.1}.

Boundedness of the off-diagonal parts of the operator $\widetilde{T_j}$ can be proved by an argument as in~\cite[Section 5]{Grafakos2015}. We note that this argument relies only on 
the decay of the $L^\nf$-norms of the derivatives of $m_{j,0}$,
which is guaranteed by condition~\eqref{e09181}; this argument requires the restriction
$M_q'> n$.

\medskip

\noindent
\textit{Necessity.}
Throughout the proof we shall adopt the convention that whenever $x\in \R^n$ then $x_r$ denotes the $r$-th coordinate of $x$, that is, $x=(x_1,\dots,x_n)$.

Let $\psi$ be a nonzero Schwartz function on $\R$ whose Fourier transform is supported in $[-1/10,1/10]$, and let $(a_l(t))_{l\in \Z^n}$ denote the sequence of Rademacher functions indexed by the elements of the set $\Z^n$. Given $N\in \mathbb N$, $N\geq 4$ we introduce the following sets: 
\begin{align*}
&I_N=\{5\cdot 2^{N-2}+1,\dots,6\cdot 2^{N-2}-1\}^n;\\
&J_N=\{5\cdot 2^{N-1}+2,\dots,6\cdot 2^{N-1}-2\}^n;\\ 
&L_N=\{41\cdot 2^{N-4}+1, \dots, 43\cdot 2^{N-4}\}^n.
\end{align*}
We observe that $L_N \subseteq J_N = I_N+I_N$. 
For $t\in [0,1]$ and $(\xi,\eta)\in \R^n \times \R^n$ we define 
$$
m_t(\xi,\eta)=\sum_{N=4}^\infty \sum_{j\in I_N} \sum_{k\in I_N} a_{j+k}(t) \prod_{r=1}^n c_{j_r+k_r} \wh{\psi}(\xi_r-j_r) \wh{\psi}(\eta_r-k_r),
$$
where $c_l=l^{-1/2} (\log l)^{-1/n}$, $l\in \mathbb N$, $l\geq 42$. It is straightforward to observe that the family of functions $m_t$ satisfies the estimates~\eqref{e09182} and~\eqref{e09181} with constants independent of $t$, and that
\begin{align*}
\|m_t\|_{L^q}
&\lesssim \left(\sum_{N=4}^\infty \sum_{j\in I_N} \sum_{k\in I_N} \prod_{r=1}^n c_{j_r+k_r}^q\right)^{\frac{1}{q}}
\lesssim \left(\sum_{N=4}^\infty \sum_{l\in J_N} \prod_{r=1}^n c_{l_r}^q 2^{nN}\right)^{\frac{1}{q}}\\
&=\left(\sum_{N=4}^\infty 2^{nN} \left(\sum_{p=5\cdot 2^{N-1}+2}^{6\cdot 2^{N-1}-2} c_p^q\right)^n \right)^{\frac{1}{q}}\\
&\lesssim \left(\sum_{N=4}^\infty 2^{nN(2-\frac{q}{2})}N^{-q}\right)^{\frac{1}{q}}<\infty
\end{align*}
whenever $q\geq 4$. 

Let $f=g$ be Schwartz functions on $\R$ such that $\wh{f}=\wh{g}$ is supported in the interval $(1,2)$ and $\wh{f}=\wh{g}=1$ inside the interval $[5/4,3/2]$, and let $F(x)=G(x)=\prod_{r=1}^n f(x_r)=\prod_{r=1}^n g(x_r)$.
Let $K\in \Z$, $K\geq 4$. Then 
\begin{align*}
&m_t(2^K\xi,2^K\eta) \wh{F}(\xi) \wh{G}(\eta)\\
&=\sum_{N=4}^\infty \sum_{j\in I_N} \sum_{k\in I_N} a_{j+k}(t) \prod_{r=1}^n c_{j_r+k_r} \wh{\psi}(2^K\xi_r-j_r) \wh{\psi}(2^K\eta_r-k_r)\wh{f}(\xi_r) \wh{g}(\eta_r).
\end{align*}
Assume that $r\in \{1,\dots,n\}$ and   $j_r\in\{5\cdot 2^{N-2}+1,\dots, 6\cdot 2^{N-2}-1\}$ (for some $N\geq 4$) are such that the function $\wh{\psi}(2^K\xi_r-j_r)\wh{f}(\xi_r)$ is not 
identically equal to $0$. Using the support properties of $\wh{\psi}$ and $\wh{f}$ we deduce that there is $\xi_r\in (1,2)$ such that $|2^K\xi_r-j_r|\leq 1/10$. Thus,
$$
2^K -\frac{1}{10} <2^K\xi_r -\frac{1}{10} \leq j_r \leq 2^K \xi_r +\frac{1}{10} <2^{K+1}+\frac{1}{10}. 
$$
Consequently, $2^K\leq j_r \leq 2^{K+1}$, which in turn implies that $j_r$ belongs to the set $\{5\cdot 2^{K-2}+1,\dots, 6\cdot 2^{K-2}-1\}$. 

Now, if $j_r$ is as in the previous paragraph and $\xi_r$ satisfies $|2^K\xi_r-j_r|\leq 1/10$ then
$$
\frac{5}{4}\leq \frac{5}{4}+\frac{9}{10\cdot 2^K} \leq \frac{j_r}{2^K}-\frac{1}{10\cdot 2^K} \leq \xi_r 
\leq \frac{j_r}{2^K}+\frac{1}{10\cdot 2^K} \leq \frac{6}{4}-\frac{9}{10 \cdot 2^K} \leq \frac{3}{2},
$$
and so $\wh{f}(\xi_r)=1$. 

The previous observations applied to both $\xi$ and $\eta$ yield
\begin{align}\label{E:m_t}
&m_t(2^K\xi,2^K\eta) \wh{F}(\xi) \wh{G}(\eta)\\
&=\sum_{j\in I_K} \sum_{k\in I_K} a_{j+k}(t) \prod_{r=1}^n c_{j_r+k_r} \wh{\psi}(2^K\xi_r-j_r) \wh{\psi}(2^K\eta_r-k_r). \nonumber
\end{align}

Let $S$ be a finite subset of $\{K\in \Z: K\geq 4\}$. We denote
$$
T^S_t=\sum_{K\in S} T_{m_t(2^K\cdot)}.
$$
Then, by~\eqref{E:m_t}, we obtain
\begin{align*}
&T^S_t(F,G)(x)\\
&=\sum_{K\in S} \sum_{j\in I_K} \sum_{k\in I_K} a_{j+k}(t) \prod_{r=1}^n c_{j_r+k_r} \frac{1}{2^{2K}} \left(\psi\left(\frac{x_r}{2^K}\right)\right)^2 e^{2\pi i x_r\cdot \frac{j_r+k_r}{2^K}}\\
&=\sum_{K\in S} \sum_{l\in J_K} a_l(t) \prod_{r=1}^n c_{l_r} \frac{1}{2^{2K}} \left(\psi\left(\frac{x_r}{2^K}\right)\right)^2 e^{2\pi i x_r\cdot \frac{l_r}{2^K}}\\ 
&\hspace{3cm} \times \min\{l_r-5\cdot 2^{K-1}-1, 6\cdot 2^{K-1}-1-l_r\}.
\end{align*}
We notice that the sets $J_K$ are pairwise disjoint in $K$. 
By Fubini's theorem and Khintchine's inequality we write
\begin{align*}
&\int_0^1 \|T^S_t(F,G)\|_{L^1}\,dt
=\int_{\R^n} \int_0^1 |T^S_t(F,G)(x)|\,dt \,dx\\
&\approx \int_{\R^n} \Biggr(\sum_{K\in S} \sum_{l\in J_K} \prod_{r=1}^n \frac{1}{2^{4K}} \left|\psi\left(\frac{x_r}{2^K}\right)\right|^4 c_{l_r}^2\\ 
&\hspace{3cm}\times \min\{l_r-5\cdot 2^{K-1}-1, 6\cdot 2^{K-1}-1-l_r\}^2\Biggr)^{\frac{1}{2}}\,dx\\
&\gtrsim \int_{\R^n} \left(\sum_{K\in S} \sum_{l\in L_K} \prod_{r=1}^n \frac{1}{2^{2K}} \left|\psi\left(\frac{x_r}{2^K}\right)\right|^4 c_{l_r}^2 \right)^{\frac{1}{2}}\,dx\\
&=\int_{\R^n} \left(\sum_{K\in S} \prod_{r=1}^n \frac{1}{2^{2K}} \left|\psi\left(\frac{x_r}{2^K}\right)\right|^4 \sum_{l\in L_K} \prod_{r=1}^n c_{l_r}^2\right)^{\frac{1}{2}}\,dx\\
&=\int_{\R^n} \left(\sum_{K\in S} \left(\sum_{p=41\cdot 2^{K-4}+1}^{43\cdot 2^{K-4}}  c_{p}^2 \right)^n \prod_{r=1}^n \frac{1}{2^{2K}} \left|\psi\left(\frac{x_r}{2^K}\right)\right|^4  \right)^{\frac{1}{2}}\,dx.
\end{align*}
Since $\psi$ is not constantly equal to $0$, there is $A>0$ such that 
$$
\int_{\{y\in \R:~ A\leq y <2A\}} |\psi(y)|^2\,dy>0.
$$
Noticing that the sets $\{x\in \R: ~A\leq \frac{x}{2^K}<2A\}$ are pairwise disjoint in $K$, we can estimate
\begin{align*}
&\int_0^1 \|T^S_t(F,G)\|_{L^1}\,dt\\
&\gtrsim \int_{\R^n} \left(\sum_{K\in S} \left(\sum_{p=41\cdot 2^{K-4}+1}^{43\cdot 2^{K-4}} c_p^2 \right)^n \prod_{r=1}^n \frac{1}{2^{2K}} \left|\psi\left(\frac{x_r}{2^K}\right)\right|^4 \chi_{\{A\leq \frac{x_r}{2^K}<2A\}}(x_r) \right)^{\frac{1}{2}}\,dx\\
&=\sum_{K\in S} \left(\sum_{p=41\cdot 2^{K-4}+1}^{43\cdot 2^{K-4}} c_p^2 \right)^{\frac{n}{2}} \prod_{r=1}^n \int_{\{x_r: ~A\leq \frac{x_r}{2^K}<2A\}} \frac{1}{2^K} \left|\psi\left(\frac{x_r}{2^K}\right)\right|^2 \,dx_r\\
&=\sum_{K\in S} \left(\sum_{p=41\cdot 2^{K-4}+1}^{43\cdot 2^{K-4}} c_p^2 \right)^{\frac{n}{2}} \left(\int_{\{y:~ A\leq y <2A\}} |\psi(y)|^2\,dy\right)^n\\
&\approx \sum_{K\in S} \left(\sum_{p=41\cdot 2^{K-4}+1}^{43\cdot 2^{K-4}} c_p^2 \right)^{\frac{n}{2}}.
\end{align*}

We have 
$$
\sum_{p=41\cdot 2^{K-4}+1}^{43\cdot 2^{K-4}} c_p^2
=\sum_{p=41\cdot 2^{K-4}+1}^{43\cdot 2^{K-4}} p^{-1} (\log p)^{-\frac{2}{n}}
\gtrsim K^{-\frac{2}{n}}.
$$
Thus, taking $S=\{4,5,\dots,D\}$ for large $D$, we obtain
$$
\int_0^1 \|T^S_t(F,G)\|_{L^1}\,dt
\gtrsim \sum_{K=4}^D \left(\sum_{p=41\cdot 2^{K-4}+1}^{43\cdot 2^{K-4}} c_p^2 \right)^{\frac{n}{2}}
\gtrsim \sum_{K=4}^D K^{-1},
$$
which tends to $\infty$ as $D\to \infty$. This contradicts~\eqref{E:boundedness_T}.
\end{proof}

\section{Consequences and   Corollaries}\label{S:corollaries}

The following result,  first proved in
\cite{Grafakos2015}, provided the inspiration for the work in this article.

\begin{prop}[{\cite[Corollary 8]{Grafakos2015}}]\label{09131}
Suppose that $m(\xi,\eta)$ is a function in $ L^2(\mathbb R^{2n}) \cap \mc C^{4n+1}(\R^{2n})$ such that 
$$
\sup_{|\alpha | \le 4n+1} \|\p^\alpha m\|_{L^{\infty}}\le C_{0}<\infty \, .
$$
Then there is a dimensional constant $C(n)$ such that the bilinear operator $T_m$ with multiplier $m$ satisfies
\begin{equation}\label{e05161}
\|T_m \|_{L^2\times L^2\to L^1}\le C(n)\, C_0^{\frac 1 5}\|m\|_{L^2}^{\frac 45}.
\end{equation}
\end{prop}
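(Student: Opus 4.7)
The plan is to deduce Proposition~\ref{09131} from Theorem~\ref{Thm1.1}, which has been established earlier in the paper. First, I would observe that Proposition~\ref{09131} is essentially the case $q = 2$ of Theorem~\ref{Thm1.1}: the smoothness hypothesis $\sup_{|\alpha|\leq 4n+1}\|\partial^\alpha m\|_{L^\infty} \leq C_0$ required here is strictly stronger than what Theorem~\ref{Thm1.1} demands for $q = 2$, since $M_2 = \lfloor 2n/2 \rfloor + 1 = n+1 \leq 4n+1$. A direct application of Theorem~\ref{Thm1.1} therefore yields
\[
\|T_m\|_{L^2\times L^2\to L^1} \leq C(n)\, C_0^{1/2}\|m\|_{L^2}^{1/2}.
\]

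The next step is to reconcile this with the specific exponents $C_0^{1/5}\|m\|_{L^2}^{4/5}$ appearing in the statement. The two bounds are not directly comparable; their ratio is
\[
\frac{C_0^{1/2}\|m\|_{L^2}^{1/2}}{C_0^{1/5}\|m\|_{L^2}^{4/5}} = \Bigl(\frac{C_0}{\|m\|_{L^2}}\Bigr)^{3/10},
\]
so Theorem~\ref{Thm1.1} already implies Proposition~\ref{09131} precisely when $C_0 \leq \|m\|_{L^2}$. To handle the complementary regime $C_0 > \|m\|_{L^2}$, I would revisit the wavelet argument from the proof of Theorem~\ref{Thm1.1}, but choose the threshold dividing $U_r = U_r^1 \cup U_r^2$ asymmetrically, overweighting the piece controlled by the $\ell^2$-side estimate at the cost of a smaller power of $\|b\|_{\ell^\infty}$; combined with a cruder summation over the wavelet scale parameters $(\lambda, G)$ and $r$, this reproduces the exponents $1/5$ and $4/5$ recorded in the proposition.

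The main obstacle I anticipate is that the exponent $1/5$ is not the natural output of the cleanest wavelet optimization, but rather an artifact of the combinatorial bookkeeping in the original proof of~\cite{Grafakos2015}. The wavelet argument underlying Theorem~\ref{Thm1.1} gives the sharper exponent $1/2$ on $C_0$, which is the form relevant in the intended applications (for instance when $C_0 = \|\Omega\|_q 2^{-\delta j}$ decays in $j$ while $\|m\|_{L^2}$ remains bounded). Consequently, for the purposes of the present paper, the most efficient path is simply to invoke Theorem~\ref{Thm1.1}, noting that it supersedes Proposition~\ref{09131}, with the exact quantitative form left to~\cite{Grafakos2015}.
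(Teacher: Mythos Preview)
The paper does not actually prove Proposition~\ref{09131}; it is quoted verbatim from~\cite{Grafakos2015} and stated solely as the motivating precursor to Corollary~\ref{05161}, which is the sharper statement with exponent $1/2$ that you correctly extract from Theorem~\ref{Thm1.1} with $q=2$. Your final conclusion---that one should invoke Theorem~\ref{Thm1.1} for the stronger form and leave the specific exponents $1/5$, $4/5$ to the cited reference---is precisely what the paper does.

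Your intermediate attempt to recover the $1/5$, $4/5$ exponents, however, has a gap. The bounds $C_0^{1/2}\|m\|_{L^2}^{1/2}$ and $C_0^{1/5}\|m\|_{L^2}^{4/5}$ are genuinely incomparable, and splitting into the regimes $C_0\le\|m\|_{L^2}$ and $C_0>\|m\|_{L^2}$ does not yield a deduction: the proposition must hold for every admissible $m$, so a valid proof has to cover the second regime unconditionally. Your proposal to reopen the wavelet decomposition with an ``asymmetric threshold'' is not a consequence of Theorem~\ref{Thm1.1} but an independent argument, and it would in effect reproduce the combinatorics of~\cite{Grafakos2015} rather than derive anything from the present paper. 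Since the paper never claims to reprove the $1/5$, $4/5$ form, this detour is unnecessary.
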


\medskip
Theorem~\ref{Thm1.1} allows us to significantly sharpen the preceding result.

\begin{cor}\label{05161}
Suppose that $m(\xi,\eta)$ is a function in $ L^2(\mathbb R^{2n}) \cap \mc C^{n+1}(\R^{2n})$ such that 
$$
\sup_{|\alpha | \le n+1} \|\p^\alpha m\|_{L^{\infty}}\le C_{0}<\infty \, .
$$
Then there is a dimensional constant $C(n)$ such that the bilinear operator $T_m$ with multiplier $m$ satisfies
\begin{equation}\label{e05161*}
\|T_m \|_{L^2\times L^2\to L^1}\le C(n)\, C_0^{\frac 1 2}\|m\|_{L^2}^{\frac 12}.
\end{equation}
Moreover, the exponent  $1/2$ is sharp in~\eqref{e05161*},
in the sense that whenever $K$ is a fixed positive integer and
\begin{equation}\label{E:r}
\|T_m \|_{L^2\times L^2\to L^1}\le C \|m\|_{L^2}^{r}
\end{equation}
holds for some $r>0$ and for all   $m\in \mc C^K(\R^{2n})$ satisfying
$$
\sup_{|\alpha | \le K} \|\p^\alpha m\|_{L^{\infty}}\le 1,
$$
then $r\geq 1/2$.
\end{cor}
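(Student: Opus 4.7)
\noindent\textbf{Proof plan for Corollary~\ref{05161}.} The bound~\eqref{e05161*} follows at once from Theorem~\ref{Thm1.1} by taking $q=2$: in that case $M_q=\lfloor 2n/(4-q)\rfloor+1=n+1$, which matches the regularity hypothesis of the corollary, and the exponents $1-q/4=1/2$ and $q/4=1/2$ appearing in~\eqref{e05162} give precisely $C_0^{1/2}\|m\|_{L^2}^{1/2}$.

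For the sharpness of the exponent $r=1/2$, my plan is to adapt, with one key modification, the necessity construction from the proof of Theorem~\ref{Thm1.1}. Let $\psi$ be a smooth bump supported in $[-1/10,1/10]$ with $\psi\equiv 1$ on $[-1/20,1/20]$, let $\phi$ be as in that proof (Schwartz with $\widehat\phi$ supported in $[-1/100,1/100]$), and let $(a_l(t))$ denote the Rademacher system. For each integer $N\ge 4$ and $t\in[0,1]$ I set
$$
m_{t,N}(\xi,\eta)=\sum_{j,k=2^N}^{2^{N+1}-1}a_{j+k}(t)\,\psi(\xi_1-j)\psi(\eta_1-k)\prod_{r=2}^n\psi(\xi_r-1)\psi(\eta_r-1).
$$
The essential difference from the multiplier used in Theorem~\ref{Thm1.1} is that the decaying factor $c_{j+k}$ is replaced by $1$ and only the single dyadic scale of index $N$ is retained. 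For test functions I take $F^N$ and $G^N$ defined via~\eqref{E:f} and~\eqref{E:g} with $b_j^N=d_j^N=2^{-N/2}\chi_{\{2^N,\ldots,2^{N+1}-1\}}(j)$, so that $\|F^N\|_{L^2}\approx\|G^N\|_{L^2}\approx 1$.

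Three observations then drive the argument. First, the shifted $\psi$-bumps in $m_{t,N}$ have pairwise disjoint supports, so every partial derivative of $m_{t,N}$ is bounded in $L^\infty$ by a constant depending only on $\psi$ and the order of differentiation, uniformly in $N$ and $t$. Second, the same disjointness forces $\|m_{t,N}\|_{L^2}^2=4^N\|\psi\|_{L^2}^{2n}$, hence $\|m_{t,N}\|_{L^2}\approx 2^N$. Third, a direct adaptation of the necessity computation in Theorem~\ref{Thm1.1} (with $c_l\equiv 1$), combined with Khintchine's inequality, yields
$$
\int_0^1\|T_{m_{t,N}}(F^N,G^N)\|_{L^1}\,dt\,\approx\, 2^{-N}\Bigl(\sum_l\nu_N(l)^2\Bigr)^{1/2}\,\approx\,2^{N/2},
$$
where $\nu_N(l)=\#\{(j,k)\in[2^N,2^{N+1})^2:j+k=l\}$ and $\sum_l\nu_N(l)^2\approx 8^N$. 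Consequently there exists $t_N^*\in[0,1]$ for which $m_N:=m_{t_N^*,N}$ satisfies $\|T_{m_N}\|_{L^2\times L^2\to L^1}\gtrsim 2^{N/2}$.

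Rescaling $m_N$ by a fixed sufficiently small constant $c>0$ forces all partial derivatives up to order $K$ to lie in the closed unit ball; substituting $cm_N$ into the hypothesised bound~\eqref{E:r} then gives $c\,2^{N/2}\lesssim(c\,2^N)^r$, i.e.\ $2^{N(1/2-r)}\lesssim c^{r-1}$ uniformly in $N$, forcing $r\ge 1/2$. The principal obstacle is the operator-norm lower bound in the third observation: once the decay factor $c_l$ is removed, one must verify that the remaining sum $\sum_l\nu_N(l)^2$ of convolution multiplicities correctly captures the desired $2^{N/2}$ growth, and that the Khintchine step still produces a favourable choice of signs $t_N^*$ unspoiled by the loss of that normalising factor.
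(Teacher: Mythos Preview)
Your proof is correct and proceeds along essentially the same lines as the paper. For the bound~\eqref{e05161*} the paper, like you, simply invokes Theorem~\ref{Thm1.1} with $q=2$. For the sharpness, the paper also returns to the necessity construction of Theorem~\ref{Thm1.1} with constant coefficients in place of the decaying $c_l$; the only cosmetic difference is that the paper first distills from that construction the abstract inequality
\[
\sum_{l} c_l^2\Bigl(\sum_{j} b_j d_{l-j}\Bigr)^{2}\;\lesssim\;\Bigl(\sum_l c_l^2(l-1)\Bigr)^{r}\Bigl(\sum_j b_j^2\Bigr)\Bigl(\sum_k d_k^2\Bigr)
\]
and then tests it with the simplest sequences $c_2=\dots=c_N=\varepsilon$, $b_1=\dots=b_N=d_1=\dots=d_N=N^{-1/2}$, whereas you keep the multiplier explicit and work on a single dyadic block $[2^N,2^{N+1})$. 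Both routes encode the same convolution-counting mechanism and lead to the same conclusion $N\lesssim N^{2r}$ (equivalently $2^{N/2}\lesssim 2^{Nr}$), hence $r\ge 1/2$.
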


 \begin{proof}
Estimate \eqref{e05161*} follows from \eqref{e05162} with $q=2$. 
The sharpness can be seen by an argument similar to the necessity part of the 
 proof of Theorem~\ref{Thm1.1}. Namely, it follows from the proof of Theorem~\ref{Thm1.1} that~\eqref{E:r} implies the validity of the inequality
\begin{equation}\label{E:r2}
\sum_{l=2}^\infty c_l^2 \left(\sum_{j=1}^{l-1} b_j d_{l-j}\right)^2 
\lesssim \left(\sum_{l=2}^\infty c_l^2 (l-1) \right)^r \left(\sum_{j=1}^\infty b_j^2 \right)
\left(\sum_{k=1}^\infty d_k^2 \right)
\end{equation}
for all sequences $(b_j)_{j=1}^\infty$, $(d_k)_{k=1}^\infty$ and $(c_l)_{l=2}^\infty$ of nonnegative numbers such that  $c_l\leq \varepsilon$ for all $l$, where $\varepsilon$ is a certain positive real number depending on $n$ and $K$. Now, if we fix $N\in \mathbb N$ and choose 
$$
c_2=\dots=c_N=\varepsilon , \q  b_1=\dots=b_N=d_1=\dots=d_N=N^{-\frac{1}{2}}
$$
and 
$$
c_i=b_i=d_i=0  \q\textup{if}\q  i>N,
$$
then~\eqref{E:r2} becomes
$$
N\lesssim N^{2r}.
$$
This implies that $r\ge 1/2$.
\end{proof}

\begin{rmk}
We would like to point out that the validity of the argument in the necessity part of the proof of Corollary~\ref{05161} is not limited to the specific value of $q=2$. Indeed, the proof can be easily modified to show the sharpness of the exponent $q/4$ in inequality~\eqref{e05162} for any $q\in [1,4)$.

The argument above can  also be applied in the borderline case $q=4$, showing that, for any fixed positive integer $K$, there is \textit{no} positive constant $\varepsilon$ for which
$$
\|T_m\|_{L^2\times L^2\to L^1}\le CC_0^{\varepsilon} \|m\|_{L^4}^{1-\varepsilon},
$$
where
$$
C_0=\sup_{|\alpha | \le K} \|\p^\alpha m\|_{L^{\infty}}.
$$
It is unknown to us whether $T_m$ is bounded from $L^2 \times L^2 \to L^1$ for every $m\in L^4 \cap \mc L^\infty$; it seems, however, that the decay of the bound with $C_0$, rather than the mere fact that $T_m$ is bounded, is what is relevant in many applications; see Theorem~\ref{04161}. 
\end{rmk}

We complement Corollary~\ref{05161} by showing that the requirement on the number  $M_q=\left\lfloor \frac{2n}{4-q} \right\rfloor +1$ of derivatives  in Theorem~\ref{Thm1.1} is optimal as well.

\begin{prop}
Let $1\le q<4$. Let $M=M(q)$ be an integer such that for all
$m$ in $ L^q(\mathbb R^{2n}) \cap \mc C^{M }(\mathbb R^{2n})
$   satisfying  
\begin{equation}
\|\p^\alpha m\|_{L^{\infty}}\le C_{0}<\infty \, \q \text{for all multiindices $\al$ with } |\alpha|\leq M
\end{equation}
there is a constant $C$ depending on $n$ and $q$ such that the bilinear operator $T_m$ with multiplier $m$ satisfies
\begin{equation}\label{ee05162}
\|T_m \|_{L^2\times L^2\to L^1}\le C\, C_0^{1-\tfrac {q}{4}}\|m\|_{L^q}^{\tfrac {q}4}.
\end{equation}
Then $M\ge \frac{2n}{4-q} $.
\end{prop}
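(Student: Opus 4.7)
The plan is to establish sharpness by a scaling argument that exploits the mismatch between the scale invariance of the $L^2\times L^2 \to L^1$ operator norm of bilinear multiplier operators and the non-invariance of the right-hand side of \eqref{ee05162}. Under the dilation $m_0 \mapsto m_\lambda := m_0(\lambda\,\cdot\,)$ with $\lambda > 1$, the operator norm of $T_{m_\lambda}$ is preserved, while $\sup_{|\alpha|\le M}\|\p^\alpha m_\lambda\|_{L^\infty}$ grows like $\lambda^M$ and $\|m_\lambda\|_{L^q}$ decays like $\lambda^{-2n/q}$. Consequently the right-hand side of \eqref{ee05162} evaluated at $m_\lambda$ scales like $\lambda^{M(1-q/4)-n/2}$, and if this exponent is negative we reach a contradiction with positivity of the operator norm.

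For the execution, I would first fix a nonzero test multiplier of tensor product form $m_0(\xi,\eta) := \varphi(\xi)\varphi(\eta)$ for some nonzero $\varphi\in C_c^\infty(\R^n)$. Since $m_0$ is smooth and compactly supported, it trivially belongs to $L^q\cap \mc C^M$, and choosing $f = \mc F^{-1}\varphi$ one computes that $T_{m_0}(f,f)(x) = (\mc F^{-1}(\varphi^2)(x))^2$, a nonnegative nonzero $L^1$ function. Hence $A_0 := \|T_{m_0}\|_{L^2\times L^2 \to L^1} > 0$. For each $\lambda > 1$, the dilate $m_\lambda(\xi,\eta) = m_0(\lambda\xi,\lambda\eta)$ still lies in $L^q\cap \mc C^M$, and a change of variables gives $\|m_\lambda\|_{L^q} = \lambda^{-2n/q}\|m_0\|_{L^q}$ as well as $\sup_{|\alpha|\le M}\|\p^\alpha m_\lambda\|_{L^\infty} \le \lambda^M C_{0,0}$, where $C_{0,0} := \sup_{|\alpha|\le M}\|\p^\alpha m_0\|_{L^\infty}$. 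Next, a direct change of variables in the defining Fourier integral produces the identity $T_{m_\lambda}(f,g)(x) = T_{m_0}(f(\lambda\,\cdot\,),g(\lambda\,\cdot\,))(x/\lambda)$, from which tracking the $L^1$ and $L^2$ norms yields the scale invariance $\|T_{m_\lambda}\|_{L^2\times L^2 \to L^1} = A_0$.

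Applying the hypothetical bound \eqref{ee05162} to each $m_\lambda$ then gives
$$
A_0 \le C\,(\lambda^M C_{0,0})^{1-q/4}\bigl(\lambda^{-2n/q}\|m_0\|_{L^q}\bigr)^{q/4} = C\, C_{0,0}^{1-q/4}\|m_0\|_{L^q}^{q/4}\,\lambda^{M(1-q/4)-n/2}
$$
for every $\lambda > 1$. If $M < \tfrac{2n}{4-q}$, then $M(1-\tfrac{q}{4}) - \tfrac{n}{2} < 0$, and letting $\lambda\to\infty$ forces $A_0 \le 0$, contradicting $A_0 > 0$. Hence $M \ge \tfrac{2n}{4-q}$, as claimed. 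No step in this plan poses a serious obstacle; the only substantive verifications are the scale-invariance identity for $T_{m_\lambda}$ and the scalings of $\|m_\lambda\|_{L^q}$ and of its derivative norms, all of which are routine changes of variables.
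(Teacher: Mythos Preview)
Your proof is correct and follows essentially the same scaling argument as the paper. The only cosmetic difference is that the paper exhibits explicit test functions $f_\la=g_\la$ (dilates of a fixed Schwartz function) to show $\|T_{m_\la}(f_\la,g_\la)\|_{L^1}\sim 1$ directly, whereas you invoke the dilation invariance $\|T_{m_\la}\|_{L^2\times L^2\to L^1}=\|T_{m_0}\|_{L^2\times L^2\to L^1}$ of the operator norm; both routes lead to the same inequality $1\ls \la^{M(1-q/4)-n/2}$ and the same conclusion upon sending $\la\to\nf$.
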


\bpf
The proof is rather straightforward. We take 
$$
m(\xi,\eta)=\prod_{j=1}^n\psi(\xi_j)\psi(\eta_j)
$$ 
 and $f_\la(x)=g_\la(x)=2^{-\la/2}\prod_{j=1}^n\vp(2^{-\la}x_j)$,
where $\psi$ and $\vp$ are the functions from the necessity part of the proof of Theorem~\ref{Thm1.1}.
Let $m_\la(\xi,\eta)=m(2^\la\xi,2^\la\eta)$. Then 
$\|T_{m_\la}(f_\la,g_\la)\|_{L^1}\sim 1$,
while $\|m_\la\|_{L^q}\sim 2^{-2n\la/q}$, and $C_0 \sim 2^{\la M}$. So \eqref{ee05162} implies the inequality
$$
1\le C2^{\la M(1-\tf q4)-\tf{n\la}{2}}.
$$
This shows that $M\ge \frac{2n}{4-q}$ by letting $\la\to\nf$.
\epf



Next, we turn to the proofs of the claimed corollaries in Section 1.

\bpf[Proof of Corollary~\ref{11241}]
Consider the  multipliers
\begin{equation}\label{defmstar}
m_1(\xi,\eta)=m(-(\xi+\eta),\eta), \qq m_2(\xi,\eta)=m(\xi,-(\xi+\eta))
\end{equation}    
of the two adjoints of $T_m$, 
$(T_m)^{*1}$ and $(T_m)^{*2}$.
It is straightforward to verify that   $m_1$ and $m_2$ belong to $L^q \cap \mathcal C^{M_q}$, with the $L^q$-norms of $m_1$ and $m_2$ being comparable to the $L^q$ norm of $m$, and that
$$
\sup_{|\al |\le M_q}\|\p^\al m_i\|_{L^\nf}\le CC_0 , \qq i=1,2 .
$$
 Therefore, by Theorem~\ref{Thm1.1} we have 
$$ 
\|T_{m_1}\|_{L^2\times L^2\to L^1},\, \|T_{m_2}\|_{L^2\times L^2\to L^1}
\le C\, C_0^{1-\tfrac {q}{4}}\|m\|_{L^q}^{\tfrac {q}4},
$$
which, by duality, implies that
\begin{equation}\label{E:endpoints}
\|T_{m}\|_{L^\nf\times L^2\to L^2},\, \|T_{m}\|_{L^2\times L^\nf\to L^2}
\le C\, C_0^{1-\tfrac {q}{4}}\|m\|_{L^q}^{\tfrac {q}4}.
\end{equation}
Interpolating between the estimates~\eqref{E:endpoints} and the estimate~\eqref{e05162} from Theorem~\ref{Thm1.1}, we deduce via~\cite[Corollary 7.2.11]{MFA} that
$$
\|T_{m}\|_{L^{p_1}\times L^{p_2}\to L^p}
\le C\, C_0^{1-\tfrac {q}{4}}\|m\|_{L^q}^{\tfrac {q}4}
$$
whenever $2\le p_1,p_2\le\nf$, $1\leq p\leq 2$ and $ 1/p= 1/{p_1}+ 1/{p_2}$.
\epf

\bpf[Proof of Corollary~\ref{11245}]
Let us first observe that the functions $m_1$ and $m_2$ defined in \eqref{defmstar} satisfy estimates
\eqref{e09182} and \eqref{e09181}. We only verify~\eqref{e09182} for $m_1$ here as the remaining inequalities can be proved similarly.
Notice that $|\xi|+|\eta|\le 2(|\xi+\eta|+|\eta|)$ and $|\xi+\eta|+|\eta|\le 2(|\xi|+|\eta|)$, so \eqref{e09182}  for $m$
implies that 
\begin{align*}
|m_1(\xi,\eta)|=\,&\,|m(-(\xi+\eta),\eta)|\\
\,\le&\, C'\min\big(|(-(\xi+\eta),\eta)|, |(-(\xi+\eta),\eta)|^{-\de}\big)\\
\,\le&\, C\min\big(|(\xi,\eta)|, |(\xi,\eta)|^{-\de}\big),
\end{align*}
which indeed proves \eqref{e09182} for $m_1$. Using this and arguing as in the proof of Corollary~\ref{11241},
we deduce the conclusion.
\epf

\section{Applications}\label{S:applications}

\subsection{Rough bilinear  singular integrals}

Let $\Om$ be a function in $L^r(\mathbb S^{2n-1})$ for some $r>1$ with vanishing integral. We denote $(y,z)'=(y,z)/|(y,z)| \in \mathbb S^{2n-1}$ and
define the rough bilinear singular integral operator $T_\Om$ by
$$
T_\Om(f,g)(x)=\textup{p.v.} \int_{\bbr^{2n}}\f{\Om((y,z)')}{|(y,z)|^{2n}}f(x-y)g(x-z)dydz.
$$ 
This operator was introduced and first studied by Coifman and Meyer~\cite{CM2}. In  \cite{Grafakos2015} 
it was proved that  $T_\Om$ is bounded 
from $L^2\times L^2$ to $L^1$ provided that $\Om\in L^r(\mathbb S^{2n-1})$
for $r\ge2$. As a consequence of  Corollary~\ref{11245} 
we  can improve this result, answering partially question (b) raised in \cite{Grafakos2015}, as follows:

\begin{thm}\label{11244}
Let $r> 4/3$, and assume that $\Om\in L^r(\mathbb S^{2n-1})$ with $\int_{\mathbb S^{2n-1}}  \Om\, d\sigma=0$.
Then we have 
$$
\|T_\Om\|_{L^{p_1}\times L^{p_2}\to L^p}<\nf
$$
whenever $2\leq p_1, p_2 \leq \infty$, $1\leq p\leq 2$ and $\frac{1}{p}=\frac{1}{p_1}+\frac{1}{p_2}$.
\end{thm}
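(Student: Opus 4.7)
\textbf{Proof proposal for Theorem~\ref{11244}.}
The plan is to realize $T_\Omega$ as an operator of the form \eqref{E:T} and then verify the hypotheses of Corollary~\ref{11245}. Fix $\psi$ smooth, supported in the unit annulus of $\mathbb R^{2n}$, with $\sum_{k\in \mathbb Z}\psi(2^{-k}\cdot)=1$, and set $K(y,z)=\Omega((y,z)')/|(y,z)|^{2n}$. Define $m=\widehat{K\psi}$; then a direct computation (as noted in the introduction and in \cite{Grafakos2015}) gives
$$
T_\Omega=\sum_{k\in\mathbb Z} T_{m_k}=T_{\sum_k m_k},\qquad m_k(\xi,\eta)=m(2^k(\xi,\eta)),
$$
so $T_\Omega$ fits the framework of \eqref{E:T} with $r_k\equiv 1$.

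Next I would check that $m$ satisfies the size and decay conditions \eqref{e09182} and \eqref{e09181}. Since $K\psi$ is compactly supported in the unit annulus and $(y,z)^\alpha K(y,z)\psi(y,z)$ is a bounded multiple of $\Omega((y,z)')$ times a smooth bump, it lies in $L^r(\mathbb R^{2n})\subset L^1(\mathbb R^{2n})$ for every multiindex $\alpha$; hence $m\in\mathcal C^\infty$ with all derivatives bounded. The mean-zero hypothesis $\int_{\mathbb S^{2n-1}}\Omega\,d\sigma=0$ yields $\int_{\mathbb R^{2n}}K\psi=0$, giving $|m(\xi,\eta)|\lesssim |(\xi,\eta)|$ near the origin. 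The decay $|\partial^\alpha m(\xi,\eta)|\lesssim |(\xi,\eta)|^{-\delta}$ at infinity for some $\delta=\delta(r)>0$ is the standard rough-kernel Fourier transform estimate (an argument going back to Duoandikoetxea--Rubio de Francia~\cite{Duoandikoetxea1986} and used in \cite{Grafakos2015}), applied to each $(y,z)^\alpha K(y,z)\psi(y,z)$. Together these give both \eqref{e09182} and \eqref{e09181} for every $\alpha$.

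The key remaining point is to produce some $q<4$ for which $m\in L^q(\mathbb R^{2n})$. Here I would invoke the Hausdorff--Young inequality. Because $K\psi$ is compactly supported, $\|K\psi\|_{L^{r_0}(\mathbb R^{2n})}\lesssim \|\Omega\|_{L^r(\mathbb S^{2n-1})}$ for every $r_0\le r$. Choosing $r_0=\min(r,2)$, Hausdorff--Young gives $m=\widehat{K\psi}\in L^{r_0'}(\mathbb R^{2n})$. If $4/3<r\le 2$, then $r_0'=r'\in[2,4)$, so $m\in L^{r'}$ with $r'<4$. If $r>2$, then $m\in L^2$, and combined with $m\in L^\infty$ (from $K\psi\in L^1$) we get $m\in L^q$ for every $q\in[2,\infty]$, in particular for some $q<4$. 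In either case we obtain $q<4$ with $m\in L^q\cap\mathcal C^{M_q'}$ satisfying \eqref{e09182} and \eqref{e09181}.

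With all hypotheses of Corollary~\ref{11245} verified (and $|r_k|=1\le 1$), the corollary yields
$$
\|T_\Omega\|_{L^{p_1}\times L^{p_2}\to L^p}<\infty
$$
for $2\le p_1,p_2\le\infty$, $1\le p\le 2$ and $1/p=1/p_1+1/p_2$. The main obstacle I anticipate is the rigorous verification of the decay estimate \eqref{e09181} for $\partial^\alpha m$ with constants depending only on $\|\Omega\|_{L^r}$ and $\alpha$; this is essentially a bookkeeping exercise based on the standard Fourier transform estimates for rough kernels with vanishing integral, and does not require new ideas beyond those already present in \cite{Grafakos2015}.
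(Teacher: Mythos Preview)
Your proposal is correct and follows essentially the same route as the paper: write $T_\Omega$ in the form \eqref{E:T} with $m=\widehat{K\psi}$ and $r_k\equiv 1$, cite the standard Duoandikoetxea--Rubio de Francia estimates for \eqref{e09182}--\eqref{e09181}, use Hausdorff--Young to place $m$ in $L^q$ for some $q<4$, and invoke Corollary~\ref{11245}. The only cosmetic difference is that the paper disposes of the case $r>2$ by first reducing to $r\le 2$ via the embedding $L^{r_1}(\mathbb S^{2n-1})\subset L^{r_2}(\mathbb S^{2n-1})$, whereas you handle it by a separate (and equally short) argument.
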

\bpf
We denote $K^0(x)=\tf{\Om(x')}{|x|^{2n}}\psi(x)$, where $\psi$ is a smooth function supported in the unit annulus 
of $\R^{2n}$ satisfying $\sum_{k\in \Z} \psi(2^{-k}\cdot)=1$, and set $m=\wh{K^0}$. It is well known that $m$ satisfies conditions \eqref{e09182} and \eqref{e09181} (see, e.g., \cite[Lemma 8.20]{Duoandikoetxea2000}). Thanks to the embedding $L^{r_1}(\mathbb S^{2n-1}) \subseteq L^{r_2}(\mathbb S^{2n-1})$ if $r_1 \geq r_2$, we may assume that $r\leq 2$. Then, by the Hausdorff-Young inequality, we obtain
\begin{equation}\label{e09011}
\|m\|_{L^{r'}} \leq C\|K^0\|_{L^r} \leq C \|\Omega\|_{L^r}.
\end{equation}
Since $r>4/3$, we have $r'<4$, and Corollary~\ref{11245} applied with $r_k=1$ for all $k$ thus yields the boundedness of $T_\Om$ from $L^2\times L^2$ to $L^1$.
\epf

\subsection{Rough bilinear singular integrals of R. Fefferman type} 
In the previous subsection we studied the rough bilinear singular integral with kernel 
$$
K(x)=\f{\Om(x')}{|x|^{2n}},
$$
which is a smooth function in the radial direction. Fefferman~\cite{Fefferman1979} observed that, in the linear case, smoothness of the kernel in the radial direction is unnecessary and obtained boundedness of the singular integral operator with kernel of the form
\begin{equation}\label{e11241}
K(x)=\rho(|x|)\f{\Om(x')}{|x|^{2n}},
\end{equation}
where $\rho$ is any bounded function. Let us now consider the bilinear operator $T_K$ associated with this kernel. Motivated by an extension of the above mentioned result due to Duoandikoetxea and Rubio de Francia~\cite{Duoandikoetxea1986}, we slightly relax the boundedness assumption on $\rho$ and assume that it satisfies the less restrictive condition
\begin{equation}\label{e11242}
\int_0^R|\rho(r)|^2dr\le C_\rho R.
\end{equation}

Our result is the following theorem.

\begin{thm}\label{T:fefferman}
Suppose that $\Om$ lies in $ L^r(\mathbb S^{2n-1})$ with $r>4/3$ and has vanishing integral over $\mathbb S^{2n-1} $. Let $\rho$ be a function on the real line satisfying~\eqref{e11242}. 
Then the bilinear singular integral operator $T_K$ with kernel $K$ given by~\eqref{e11241} satisfies
 $$
\|T_K\|_{L^{p_1}\times L^{p_2}\to L^p}<\nf
$$
whenever $2\leq p_1, p_2 \leq \infty$, $1\leq p\leq 2$ and $\frac{1}{p}=\frac{1}{p_1}+\frac{1}{p_2}$.

Moreover, if  $\Om\in L^\nf$ and $\rho\in L^\nf$, then $T_K$ is bounded from $L^{p_1}\times L^{p_2}$ to $L^p$ for $1<p_1,p_2<\nf$, $1/2<p<\infty$, and $ 1/p= 1/{p_1}+ 1/{p_2}$.
\end{thm}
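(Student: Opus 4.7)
The plan is to adapt the proof of Theorem~\ref{11244} to the Fefferman-type kernel by absorbing the radial factor $\rho$ into a $k$-dependent family of unit-scale multipliers. Let $\psi$ be a smooth bump supported in the unit annulus of $\R^{2n}$ with $\sum_{k\in\Z}\psi(2^{-k}\cdot)=1$ on $\R^{2n}\setminus\{0\}$, and write $K=\sum_{k\in\Z}K_k$ with $K_k(x)=K(x)\psi(2^{-k}x)$. After rescaling, one verifies that $T_{K_k}=T_{m_k(2^k\cdot)}$, where $m_k=\widehat{\widetilde K_k}$ and
$$
\widetilde K_k(x)=\psi(x)\,\rho(2^k|x|)\,\frac{\Omega(x')}{|x|^{2n}},
$$
so that $T_K=\sum_k T_{m_k(2^k\cdot)}$. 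This is a variant of the operator in~\eqref{E:T} (with $r_k=1$) in which the unit-scale multiplier depends on $k$.

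The key step is to bound the family $\{m_k\}$ uniformly in $k$ in the sense required by Theorem~\ref{04161}. Assume without loss of generality that $r\le 2$ (so $q:=r'\in[2,4)$). The uniform $L^q$ bound $\|m_k\|_{L^q}\ls C_\rho^{1/2}\|\Omega\|_{L^r(\mathbb S^{2n-1})}$ follows from Hausdorff--Young, a change of variables, and H\"older in the radial variable combined with~\eqref{e11242}. To verify~\eqref{e09182} and~\eqref{e09181}, pass to polar coordinates and apply Cauchy--Schwarz in $r$ together with~\eqref{e11242} and the standard decay estimate
$$
\Big|\int_{\mathbb S^{2n-1}}\psi(r\sigma)\Omega(\sigma)e^{-2\pi ir\sigma\cdot\zeta}\,d\sigma\Big|\ls\|\Omega\|_{L^r(\mathbb S^{2n-1})}(r|\zeta|)^{-\vep}
$$
for some $\vep=\vep(n,r)>0$ (compare~\cite[Lemma 8.20]{Duoandikoetxea2000}); the behaviour near the origin comes from the vanishing integral of $\Omega$. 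Derivatives are handled by inserting factors $x^\alpha$, which are bounded on the compact support of $\psi$.

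With these uniform bounds in place, the sufficiency part of Theorem~\ref{04161} applies to the sum $\sum_k T_{m_k(2^k\cdot)}$: inspection of its proof shows that the wavelet analysis of each unit-scale piece only relies on the uniform constants in~\eqref{e09182},~\eqref{e09181} and on the uniform $L^q$-bound, so the argument continues to work when the single multiplier $m$ is replaced by the family $\{m_k\}$. This yields the $L^2\times L^2\to L^1$ boundedness of $T_K$, and duality plus interpolation as in Corollaries~\ref{11241} and~\ref{11245} extend it to the stated range. For the second assertion, when $\Omega,\rho\in L^\nf$, the kernel $K$ is pointwise dominated by $\|\Omega\|_{L^\nf}\|\rho\|_{L^\nf}|x|^{-2n}$ and inherits the cancellation needed in~\cite{Grafakos2015}; a routine adaptation of the argument there extends the range to $1<p_1,p_2<\nf$, $1/2<p<\nf$. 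The main obstacle is verifying that the proof of Theorem~\ref{04161} accepts a $k$-dependent family rather than a single fixed multiplier---largely bookkeeping, but requiring care to confirm that no step implicitly uses the common structural origin of the dilated pieces.
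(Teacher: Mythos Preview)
Your proposal is correct and follows essentially the same route as the paper: the paper formalizes your observation that Theorem~\ref{04161} accepts a $k$-dependent family as a separate Proposition~\ref{11251}, invokes Lemma~\ref{11243} (from~\cite{Duoandikoetxea1986}) for the decay estimates you sketch via polar coordinates and Cauchy--Schwarz, and verifies the uniform $L^q$ bound exactly as you describe through Hausdorff--Young and~\eqref{e11242}. For the second assertion the paper is somewhat more explicit, carrying out the $T_j$-decomposition and interpolating the $L^2\times L^2\to L^1$ decay $2^{-j\delta}$ against the bilinear Calder\'on--Zygmund bound $C_\varepsilon j^\varepsilon$ as in~\cite{Grafakos2015}, but this is precisely the ``routine adaptation'' you allude to.
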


To prove Theorem~\ref{T:fefferman} we need the following variant of Corollary~\ref{11245}.

\begin{prop}\label{11251}
Let $1\leq q <4$ and let $(M_k)_{k\in \Z}$ be a sequence of multipliers belonging to $\mc C^{M_q'}(\R^{2n})$, where $M_q'=\max\big(2n,\lfloor \frac{2n}{4-q}\rfloor +1\big)$, and satisfying 
\begin{equation}\label{E:integrability_M_k}
\sup_{k\in \Z} \|M_k(2^{-k}\cdot)\|_{L^q} <\infty.
\end{equation}
Assume, moreover, that
\begin{equation}\label{E:M_k}
|M_k(\xi,\eta)|\le C\min(|2^k(\xi,\eta)|, |2^k(\xi,\eta)|^{-\de}),
\end{equation}
and 
\begin{equation}\label{E:derivative_M_k}
|\p^\al M_k(\xi,\eta)|\le C_\al 2^{k|\al|}\min(1, |2^k(\xi,\eta)|^{-\de})
\end{equation}
for all multiindices $\alpha$ with $|\alpha|\leq M_q'$ and some fixed $\delta>0$.
Let $T_{M_k}$ be the bilinear operator associated with multiplier $M_k$, and define
$T=\sum_{k\in\bbz} T_{M_k}$.
Then
$$
\|T\|_{L^{p_1}\times L^{p_2}\to L^p}<\nf
$$
whenever $2\le p_1,p_2\le\nf$, $1\leq p\leq 2$ and $ 1/p= 1/{p_1}+ 1/{p_2}$.
\end{prop}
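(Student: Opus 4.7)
The plan is to reduce Proposition~\ref{11251} to the setting of Theorem~\ref{04161} by a scale-by-scale renormalization and then to rerun the proof of that theorem with each annular piece of the single multiplier $m$ replaced by a family $\{m_{k,j}\}_k$ that satisfies the same estimates uniformly in $k$. First I would set $m_k(\xi,\eta)=M_k(2^{-k}(\xi,\eta))$, so that $T_{M_k}=T_{m_k(2^k\cdot)}$. A direct computation using \eqref{E:integrability_M_k}, \eqref{E:M_k} and \eqref{E:derivative_M_k} then shows that
\[
\sup_{k\in\bbz}\|m_k\|_{L^q}<\nf,\q |m_k(\xi,\eta)|\ls\min\big(|(\xi,\eta)|,|(\xi,\eta)|^{-\de}\big),
\]
and $|\p^\al m_k(\xi,\eta)|\ls\min(1,|(\xi,\eta)|^{-\de})$ uniformly in $k$ and for $|\al|\le M_q'$; these are exactly conditions~\eqref{e09182} and~\eqref{e09181} applied to each $m_k$ with constants independent of $k$.

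Next I would mimic the annular decomposition from the proof of Theorem~\ref{04161}. Pick a smooth $\psi$ supported in the unit annulus of $\R^{2n}$ with $\sum_{j\in\bbz}\psi(2^{-j}\cdot)=1$, and set $\psi_j=\psi(2^{-j}\cdot)$ and $m_{k,j}=m_k\psi_j$. Then
\[
T=\sum_{j\in\bbz}\widetilde T_j,\q \widetilde T_j:=\sum_{k\in\bbz}T_{m_{k,j}(2^k\cdot)},
\]
and the goal becomes to bound each $\widetilde T_j$ from $L^2\times L^2$ to $L^1$ with geometric decay in $|j|$. For $j\le 0$ the pointwise bound $|m_k(\xi,\eta)|\ls|(\xi,\eta)|$ on the support of $\psi_j$ puts the symbol $\sum_k m_{k,j}(2^k\cdot)$ inside the scope of bilinear Coifman--Meyer theory and produces the required geometric bound exactly as in \cite[Proposition~3]{Grafakos2015}, using the assumption $M_q'\ge 2n$. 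For $j>0$ I would decompose each $m_{k,j}$ into a diagonal part $m_{k,j}^1$ and an off-diagonal part $m_{k,j}^2$ via the product-type wavelets of Section~\ref{S:prelim}. The off-diagonal term is estimated exactly as in \cite[Section~5]{Grafakos2015} using only the scale-uniform estimate $\|\p^\al m_{k,j}\|_{L^\nf}\ls 2^{-j\de}$. The diagonal term is handled by applying Theorem~\ref{Thm1.1} to each $m_{k,j}^1$ individually and then summing in $k$ via the overlap argument of \cite[Section~4]{Grafakos2015}, in which the Fourier supports of the diagonal pieces at adjacent scales $k$ overlap in only $O(j)$ dyadic layers; this yields
\[
\|\widetilde T_j\|_{L^2\times L^2\to L^1}\ls j\,\big(\sup_{k}\|m_{k,j}\|_{L^q}\big)^{q/4}2^{-j\de(1-q/4)},
\]
which is summable in $j>0$ because $\de(1-q/4)>0$. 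Combined with the $j\le 0$ estimate this gives $\|T\|_{L^2\times L^2\to L^1}<\nf$.

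The main obstacle is essentially a bookkeeping one: one must verify that none of the estimates imported from the proof of Theorem~\ref{04161} actually uses that the pieces at different scales $k$ are dilations of a single $m_{j,0}$. The relevant ingredients there consume only (i) the uniform $L^q$-norm of the annular piece and (ii) the uniform $L^\nf$-norm of its derivatives; in the proof of Theorem~\ref{04161} these quantities happen to be equal to $\|m_{j,0}\|_{L^q}$ and $C_0\cdot 2^{-j\de}$ for the common choice $m_{j,0}=m\psi_j$, but the argument nowhere uses this equality, and both quantities remain under control uniformly in $k$ for the family $\{m_{k,j}\}_k$. Once the $L^2\times L^2\to L^1$ bound is in hand, the extension to $2\le p_1,p_2\le\nf$, $1\le p\le 2$, $1/p=1/p_1+1/p_2$ follows by the duality and bilinear interpolation argument used in the proof of Corollary~\ref{11245}: the changes of variables $(\xi,\eta)\mapsto(-(\xi+\eta),\eta)$ and $(\xi,\eta)\mapsto(\xi,-(\xi+\eta))$ preserve the conditions \eqref{E:integrability_M_k}, \eqref{E:M_k}, and \eqref{E:derivative_M_k} (with admissibly enlarged constants), so the same scheme bounds the two adjoints of $T$ on $L^2\times L^\nf\to L^2$ and $L^\nf\times L^2\to L^2$, and complex interpolation completes the claimed range of indices.
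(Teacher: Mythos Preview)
Your proposal is correct and follows essentially the same approach as the paper: the authors simply remark that the proof of Corollary~\ref{11245} (hence of Theorem~\ref{04161}) translates verbatim to the present setting, and your rescaling $m_k=M_k(2^{-k}\cdot)$ together with the observation that every step uses only the uniform-in-$k$ bounds on $\|m_{k,j}\|_{L^q}$ and $\|\p^\al m_{k,j}\|_{L^\nf}$ is exactly the content of that remark. Your treatment of the duality/interpolation step to reach the full range of exponents also matches the paper's argument for Corollary~\ref{11245}.
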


Proposition~\ref{11251} coincides with Corollary~\ref{11245} if $M_k(\xi,\eta)=m(2^k(\xi,\eta))$. In the general case, the proof of Corollary~\ref{11245} translates   verbatim into the proof of Proposition~\ref{11251}; we leave the details to the interested reader. 

We shall also need the following lemma which follows from the proof of \cite[Corollary 4.1]{Duoandikoetxea1986}.
\begin{lm}\label{11243}
Let $k\in \Z$ and let $K$ be as in Theorem~\ref{T:fefferman}. If $K^k=K\psi(2^{-k}\cdot)$, where $\psi$ is a smooth function supported in the unit annulus of $\R^{2n}$, and $\de$ is a positive real number satisfying $2\de r'<1$, then we have
$$
|\wh{K^k}(\xi,\eta)|\le C\|\Om\|_{L^r}\min(|2^k(\xi,\eta)|, |2^k(\xi,\eta)|^{-\de})
$$
and 
$$
|\p^\al \wh{K^k}(\xi,\eta)|\le C_\al \|\Om\|_{L^r}2^{k|\al|}\min(1, |2^k(\xi,\eta)|^{-\de})
$$
for all multiindices $\alpha$.
\end{lm}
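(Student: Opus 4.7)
The plan is to reduce to unit scale via $x=2^k y$, which transforms the Fourier integral into
$$
\widehat{K^k}(\xi,\eta) = \int_{\R^{2n}} \rho(2^k|y|)\,\frac{\Omega(y')}{|y|^{2n}}\,\psi(y)\, e^{-2\pi i y\cdot \zeta}\,dy,
\qquad \zeta = 2^k(\xi,\eta),
$$
so the required bound becomes a statement about this unit-scale integrand in terms of $|\zeta|$. I will work with a radial choice of $\psi$ (which we may make in the application to Theorem~\ref{T:fefferman}), so that $\psi(y)=\psi_0(|y|)$. Passing to polar coordinates $y=r\theta$, I rewrite the expression as
$$
\widehat{K^k}(\xi,\eta) = \int_{S^{2n-1}} \Omega(\theta)\, \widehat h(\theta\cdot\zeta)\, d\sigma(\theta),
$$
where $h(r) = \rho(2^k r)\,\psi_0(r)\,r^{-1}\,\mathbf{1}_{r>0}$ is supported in $[1/2,2]$ and, by Cauchy--Schwarz together with the hypothesis~\eqref{e11242}, satisfies $\|h\|_{L^1}+\|h\|_{L^2}\lesssim 1$ uniformly in $k$.

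Two complementary bounds will then be produced. For small $|\zeta|$, the hypothesis $\int_{S^{2n-1}}\Omega\,d\sigma = 0$ lets me replace $e^{-2\pi i r\theta\cdot\zeta}$ by $e^{-2\pi i r\theta\cdot\zeta}-1$ inside the sphere integral; combining $|e^{-2\pi i r\theta\cdot\zeta}-1|\lesssim r|\zeta|$ with the $L^1$ control on $h$ then yields the linear bound $|\widehat{K^k}(\xi,\eta)|\lesssim \|\Omega\|_{L^r}|\zeta|$. For the decay, I will apply H\"older on the sphere and then slice along the direction $\zeta/|\zeta|$ to get
$$
\int_{S^{2n-1}} |\widehat h(\theta\cdot\zeta)|^{r'}\,d\sigma(\theta) \,\lesssim\, \frac{1}{|\zeta|}\int_{\R}|\widehat h(t)|^{r'}\,dt.
$$
Interpolating $\|\widehat h\|_{L^\infty}\le \|h\|_{L^1}\lesssim 1$ against $\|\widehat h\|_{L^2}=\|h\|_{L^2}\lesssim 1$ controls the right-hand side by a constant when $r'\ge 2$, producing decay $|\zeta|^{-1/r'}$, which dominates $|\zeta|^{-\delta}$ whenever $2\delta r'<1$; the range $r'<2$ is handled directly via the $L^2$ estimate on the sphere and the embedding $L^2(S^{2n-1})\hookrightarrow L^{r'}(S^{2n-1})$.

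The derivative estimate will follow by applying the same scheme to the integrand $(-2\pi i x)^\alpha K^k(x)$. The change of variables produces the factor $2^{k|\alpha|}$; the angular function becomes $\theta^\alpha \Omega(\theta)$, still in $L^r(S^{2n-1})$ with norm at most $\|\Omega\|_{L^r}$; and the radial factor becomes $r^{|\alpha|-1}\rho(2^k r)\psi_0(r)$, whose $L^2$ norm on $[1/2,2]$ is again $\lesssim 1$. Consequently the decay and $L^\infty$ bounds transfer verbatim. I emphasize that the vanishing-moment improvement is unavailable here, since $\theta^\alpha \Omega$ need not have mean zero on the sphere, and this is precisely why the derivative estimate carries only $\min(1,|\zeta|^{-\delta})$ rather than $\min(|\zeta|,|\zeta|^{-\delta})$. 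The main obstacle is the lack of smoothness of $\rho$, which rules out integration-by-parts for the $r$-integral; the resolution, as outlined above, is to invest the $L^2$-information supplied by~\eqref{e11242} into the Fourier transform of $h$ in the $r$-variable and then transfer this control to the sphere via the slicing computation, at the price of the restriction $2\delta r'<1$.
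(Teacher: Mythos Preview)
The paper does not give its own proof of this lemma but defers to the argument for \cite[Corollary~4.1]{Duoandikoetxea1986}, and your proposal is precisely that argument: rescale to the unit annulus, pass to polar coordinates, apply H\"older on the sphere, and exploit the $L^1$/$L^2$ control on the radial profile $h(r)=\rho(2^kr)\psi_0(r)/r$ supplied by~\eqref{e11242} (your restriction to radial $\psi$ is, as you note, harmless for the application). One small caution: the slicing bound $\int_{S^{2n-1}}|\widehat h(\theta\cdot\zeta)|^{r'}\,d\sigma\lesssim |\zeta|^{-1}\|\widehat h\|_{L^{r'}(\R)}^{r'}$ as written uses that the density $(1-s^2)^{(2n-3)/2}$ is bounded and hence needs $n\ge 2$; for $n=1$ one absorbs the endpoint singularity at $s=\pm 1$ by an additional H\"older step, which still yields $|\zeta|^{-\delta}$ for every $\delta$ with $2\delta r'<1$.
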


\bpf[Proof of Theorem~\ref{T:fefferman}]
Let $\psi$ be a smooth function supported in the unit annulus of $\R^{2n}$ such that $\sum_{k\in \Z} \psi(2^{-k} \cdot) =1$. For $k\in \Z$ we denote $\psi_k=\psi(2^{-k}\cdot)$ and $M_k=\wh{K\psi_k}$. Then $T_K=\sum_{k\in \Z} T_{M_k}$, and the first statement will thus follow if we verify the assumptions~\eqref{E:M_k}, \eqref{E:derivative_M_k} and \eqref{E:integrability_M_k} of Proposition~\ref{11251}.

The validity of conditions~\eqref{E:M_k} and~\eqref{E:derivative_M_k} follows from Lemma~\ref{11243}. Let us now show that condition~\eqref{E:integrability_M_k} is fulfilled with $q=r'<4$. 
  We will assume throughout the proof that $q'=r\leq 2$. This assumption can be  made without loss of generality thanks to the embedding $L^{r_1}(\mathbb S^{2n-1})\subseteq L^{r_2}(\mathbb S^{2n-1})$ when
$r_1\ge r_2$. For any fixed $k\in \Z$ we have
\begin{equation*}
\|M_k(2^{-k}\cdot)\|_{L^q}
=2^{\frac{2kn}{q}} \|M_k\|_{L^q}
=2^{\frac{2kn}{q}} \|\wh{K\psi_k}\|_{L^q}
\lesssim 2^{\frac{2kn}{q}} \|K\psi_k\|_{L^{q'}},
\end{equation*}
where the last estimate follows from the Hausdorff-Young inequality. Now,
\begin{align*}
\|K\psi_k\|_{L^{q'}}^{q'}
&=\int_{\R^{2n}} |\rho(|x|)\psi_k(x)|^{q'} \frac{|\Omega(x')|^{q'}}{|x|^{2nq'}}\,dx\\
&\lesssim \int_{2^{k-1}}^{2^{k+1}} |\rho(r)|^{q'} r^{2n(1-q')-1} \int_{\mathbb S^{2n-1}} |\Omega(\theta)|^{q'}\,d\theta\,dr\\
&\lesssim 2^{k(2n(1-q')-1)} \|\Omega\|_{L^{q'}}^{q'} \int_{2^{k-1}}^{2^{k+1}} |\rho(r)|^{q'}\,dr\\
&\lesssim 2^{k(2n(1-q')-\frac{q'}{2})} \|\Omega\|_{L^{q'}}^{q'} \left(\int_{2^{k-1}}^{2^{k+1}} |\rho(r)|^{2}\,dr\right)^{\frac{q'}{2}}\\
&\lesssim 2^{2kn(1-q')} \|\Omega\|_{L^{q'}}^{q'},
\end{align*}
by~\eqref{e11242}.
Altogether,
$$
\|M_k(2^{-k}\cdot)\|_{L^q} \leq C 2^{\frac{2kn}{q}} 2^{-\frac{2kn}{q}} \|\Omega\|_{L^{q'}}=C\|\Omega\|_{L^r},
$$
with $C$ independent of $k$, as desired.

We now turn to the second statement. We denote by $T_j$ the bilinear operator associated with the multiplier 
$$
m_j(\xi,\eta)=\sum_{k\in\bbz}M_k(\xi,\eta)\psi(2^{k-j}(\xi,\eta)).
$$ 
The $L^{p_1} \times L^{p_2} \rightarrow L^p$ boundedness of $\sum_{j\leq 0} T_j$ follows from the bilinear Coifman-Meyer theory by an argument analogous to the one in~\cite[Proposition 3]{Grafakos2015}. Let us thus assume that $j>0$ in what follows. Then the operator $T_j$ is bounded from
$L^2\times L^2$ to $L^1$ with bound $C2^{-j\de}$, where $\de\sim 1/r'$.
If both $\Om$ and $\rho$ are bounded, following the argument in \cite[Section 6]{Grafakos2015} we deduce that $T_j$ is a bilinear Calder\'on-Zygmund operator with constant
$C_\varepsilon\, j^\varepsilon$ for any $\varepsilon \in (0,1)$. Interpolating between the two estimates as in~\cite[Lemma 12]{Grafakos2015}, we obtain
$\|T_j\|_{L^{p_1}\times L^{p_2}\to L^p}\le C2^{-j\de_1}$ with $\de_1=\de_1(p_1,p_2)>0$,
where $1<p_1,p_2<\nf$ and $\tf1p=\tf1{p_1}+\tf1{p_2}$.
Summing over $j$, the boundedness of $T$ follows.
\epf

\subsection{Bilinear dyadic spherical maximal operators}

In this subsection we study the bilinear dyadic  spherical maximal operator given by
\begin{equation}
A^d(f,g)(x)=\sup_{k\in \mathbb Z} |A_{2^k}(f,g)(x)|,
\end{equation}
where $A_t(f,g)(x)=\int_{\mathbb S^{2n-1}}f(x-ty)g(x-tz)d\si(y,z)$.
The $L^2 \times L^2 \rightarrow L^1$ boundedness of this operator follows from Theorem~\ref{04161}
by a routine argument.

\begin{thm}
The bilinear dyadic spherical maximal operator is bounded from
$L^{p_1}(\rn)\times L^{p_2}(\rn)$ to $L^p(\rn)$ whenever $n\geq 2$ and $2\leq p_1,p_2\leq \infty$, $1\leq p\leq 2$ and $ {1}/{p}= {1}/{p_1}+ {1}/{p_2}$. 
\end{thm}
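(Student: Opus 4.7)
The plan is to realize $A^d$ through bilinear multipliers and reduce the supremum over $k$ to a uniform norm bound on Rademacher-averaged multiplier operators that falls within the scope of Corollary~\ref{11245}. Writing $A_{2^k}(f,g)=T_{\wh\sigma(2^k\,\cdot)}(f,g)$ with $\sigma$ the surface measure on $\mathbb S^{2n-1}$, the immediate obstacle is that $\wh\sigma(0,0)=|\mathbb S^{2n-1}|\neq 0$, so $\wh\sigma$ itself fails the vanishing-at-the-origin condition \eqref{e09182}. To fix this, fix even smooth compactly supported bumps $\phi_1,\phi_2$ on $\R^n$ with $\phi_1(0)=\phi_2(0)=1$, set $\phi(\xi,\eta)=\phi_1(\xi)\phi_2(\eta)$, and split
$$
\wh\sigma=c_n\phi+m,\qquad c_n:=|\mathbb S^{2n-1}|,\qquad m:=\wh\sigma-c_n\phi.
$$

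For the tensor-product piece,
$$
T_{c_n\phi(2^k\,\cdot)}(f,g)=c_n\,[\phi_1(2^kD)f]\cdot[\phi_2(2^kD)g],
$$
and a standard radial-majorant argument for Schwartz convolvers yields $\sup_k|\phi_i(2^kD)h|\ls Mh$ pointwise, where $M$ is the Hardy--Littlewood maximal operator. Hence $\sup_k|T_{c_n\phi(2^k\cdot)}(f,g)|\ls Mf\cdot Mg$, and H\"older combined with the maximal inequality gives the desired $L^{p_1}\times L^{p_2}\to L^p$ bound in the full range $2\le p_1,p_2\le\infty$, $1\le p\le 2$, $1/p=1/p_1+1/p_2$ (using $\|M\|_{L^\infty\to L^\infty}\le 1$ to cover the endpoint $p_i=\infty$).

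For the main piece, the pointwise bound $\sup_k|a_k|\le\bigl(\sum_k|a_k|^2\bigr)^{1/2}$ together with Khintchine's inequality applied to Rademacher functions $(\varepsilon_k(t))_k$ yields
$$
\Bigl\|\sup_k|T_{m(2^k\,\cdot)}(f,g)|\Bigr\|_{L^p}\ls\sup_{t\in[0,1]}\Bigl\|T_{\sum_k\varepsilon_k(t)\,m(2^k\,\cdot)}(f,g)\Bigr\|_{L^p}.
$$
Thus it suffices to prove the uniform estimate
$$
\sup_{|r_k|\le 1}\bigl\|T_{\sum_k r_k\,m(2^k\,\cdot)}\bigr\|_{L^{p_1}\times L^{p_2}\to L^p}<\infty,
$$
which is precisely the conclusion of Corollary~\ref{11245} once its hypotheses on $m$ are verified.

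To verify those hypotheses, observe that $m\in\mc C^\infty$, that $m(0,0)=0$, and that the classical estimates $|\p^\al\wh\sigma(\xi,\eta)|\ls_\al(1+|(\xi,\eta)|)^{-(2n-1)/2}$ obtained from stationary phase, combined with the compact support of $\phi$, give \eqref{e09182} and \eqref{e09181} with $\delta=(2n-1)/2$. Moreover $m\in L^q(\R^{2n})$ precisely when $q(2n-1)/2>2n$, i.e.\ $q>4n/(2n-1)$; the hypothesis $n\ge 2$ guarantees $4n/(2n-1)\le 8/3<4$, so one can pick any $q\in\bigl(4n/(2n-1),4\bigr)$ and invoke Corollary~\ref{11245}. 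The main (minor) technical point is the insistence on a tensor-product $\phi$, which is what makes the low-frequency piece split as a product of linear operators bounded by $M$; everything else is bookkeeping on top of Corollary~\ref{11245} and Khintchine's inequality.
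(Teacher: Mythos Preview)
Your proof is correct and follows essentially the same approach as the paper: split $\wh\sigma$ into a tensor-product bump (handled by $Mf\cdot Mg$ and H\"older) plus a remainder $m$ vanishing at the origin, then reduce the $\ell^2$-square function of $T_{m(2^k\cdot)}$ to Corollary~\ref{11245} via Khintchine. The only cosmetic difference is that the paper subtracts a Schwartz tensor product on the physical side while you subtract a compactly supported tensor product on the frequency side; both choices verify \eqref{e09182}, \eqref{e09181} with $\delta=(2n-1)/2$ and $m\in L^q$ for some $q<4$ when $n\ge 2$.
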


\bpf
Let 
$$
\varphi(y,z)=\psi(y)\psi(z),
$$ 
where $\psi$ is a radially decreasing Schwartz function on $\R^n$ such that
$$
\left(\int_{\R^n} \psi(x)\,dx\right)^2=\int_{\R^{2n}} \varphi(y,z)\,dy\,dz= |\mathbb S^{2n-1}|.
$$ 
We define $\mu=d\si-\vp$ and observe that, by~\cite[Theorem 2.1.10]{CFA},
$$
A^d(f,g)(x)\le |\mathbb S^{2n-1}| M(f)(x)M(g)(x)+M_\mu(f,g)(x),
$$
where $M$ is the Hardy-Littlewood maximal operator and $M_\mu(f,g)(x)=\sup_{k\in\mathbb Z}|A_{\mu,k}(f,g)|(x)$ with
\begin{align*}
A_{\mu, k}(f,g)(x)=&\int_{\bbr^{2n}}f(x-2^ky)g(x-2^kz)d\mu(y,z)\\
=&\int_{\bbr^{2n}} \wh f(\xi)\wh g(\eta)\wh \mu(2^k(\xi,\eta))e^{2\pi ix\cdot(\xi+\eta)}d\xi d\eta.
\end{align*}
Let $m=\wh\mu$. Since both $\wh{d\sigma}$ and $\wh{\varphi}$ are continuously differentiable and  $\wh{d\sigma}(0)=\wh{\varphi}(0)$, Taylor's remainder theorem yields the estimate
$|m(\xi,\eta)| \lesssim |(\xi,\eta)|$ in a neighborhood of the origin. Furthermore, it is well-known that $m$ satisfies
$\eqref{e09181}$ with $\de=
\tf{2n-1}2$ (see \cite[p. 178]{Duoandikoetxea2000}), which implies, in particular, that $m\in L^q$ for every $q>\frac{4n}{2n-1}$. Thus, $m\in L^q$ for some $q<4$ provided that $n\geq 2$. 

Using Khintchine's inequality and Fubini's theorem, we can control $\|M_\mu(f,g)\|_{L^p}^p$ by
$$ 
\Big\|\Big(\sum_{k\in \Z} |A_{\mu,k}(f,g)|^2\Big)^{1/2}\Big\|_{L^p}^p
\approx \int_0^1\Big\|\sum_{k\in \Z} r_k(t) A_{\mu,k}(f,g)\Big\|_{L^p}^pdt,$$
where $\{r_k\}$ is the sequence of Rademacher functions.
Consequently, by Corollary~\ref{11245}, we obtain
that 
$$
\Big\|\sum_{k\in \Z} r_k(t) A_{\mu,k}(f,g)\Big\|_{L^p}\le 
C\|f\|_{L^{p_1}}\|g\|_{L^{p_2}}
$$
with $C$ independent of $t$.
This concludes the proof.
\epf




\end{document}